\documentclass[a4paper,10pt]{article}

\usepackage{a4wide}
\usepackage{amsmath}
\usepackage{amsthm}
\usepackage{amsfonts}
\usepackage{amssymb}
\usepackage{mathtools}
\usepackage{algorithm}
\usepackage{epsfig}
\usepackage[utf8]{inputenc}
\usepackage{graphics}
\usepackage{caption}
\usepackage{booktabs}
\usepackage{framed}
\usepackage{listings}
\usepackage{caption}
\usepackage{subcaption}
\usepackage{colortbl}
\usepackage{xcolor}
\usepackage[colorlinks]{hyperref}
\usepackage{color} 
\usepackage[most]{tcolorbox}
\usepackage{algpseudocode}

\newtcolorbox{tcbdoublebox}[1][]{%
  enhanced jigsaw,
  sharp corners,
  colback=white,
  borderline={1pt}{-2pt}{black},
  fontupper={\setlength{\parindent}{20pt}},
  #1
}

\newcolumntype{g}{>{\columncolor{Gray}}c}
\definecolor{mygreen}{RGB}{28,172,0} 
\definecolor{mylilas}{RGB}{170,55,241}

\usepackage{fancyhdr} 
\newcommand{\R}{\mathbb{R}}

\newcommand{\E}{\mathbb{E}}

\newcommand{\ra}{\rightarrow}

\newcommand{\bmat}{\begin{bmatrix}}
\newcommand{\emat}{\end{bmatrix}}
\newcommand{\ljp}{[\![}
\newcommand{\rjp}{]\!]}

\newcommand{\cD}{{\mathcal{D}}}
\newcommand{\cW}{{\mathcal{W}}}
\newcommand{\cQ}{{\mathcal{Q}}}

\newcommand{\cT}{{\mathcal{T}}}

\newcommand{\Em}{{\mathbb{E}_{\xi}}}

\DeclareMathOperator*{\esssup}{ess\,sup}

\DeclareMathOperator{\Div}{div}
\DeclareMathOperator{\Tr}{tr}
\DeclareMathOperator{\diff}{d\!}

\newtheorem{theorem}{Theorem}

\newtheorem{assumption}{Assumption}

\newtheorem{definition}{Definition}
\newtheorem{remark}{Remark}
\newtheorem{proposition}{Proposition}
\newtheorem{acknowledgements}{Acknowledgements}

\newcommand{\x}[2][]{#2}

\definecolor{Gray}{gray}{0.9}

\author{Oğuz Han Altıntaş \footnote{Institute of Applied Mathematics, Middle East Technical University, 06800, Ankara, Türkiye and  ROKETSAN, 06780, Ankara, Türkiye, \url{oguz.altintas@metu.edu.tr}} \and Hamdullah Yücel \footnote{Institute of Applied Mathematics, METU, Ankara, Türkiye, \url{yucelh@metu.edu.tr}}}
\title{An Adaptive Framework for Robust Structural Shape Optimization under Uncertainty}

\begin{document}

\maketitle

\begin{abstract}
This work presents an adaptive framework for solving a robust structural shape optimization problem governed by linear elasticity with uncertain loading and material parameters. A posteriori error estimators are constructed to control the sample size, mesh resolution, and optimization step length. The sample size used in the stochastic gradient approximation is adjusted dynamically according to the variance of the sampled shape derivatives. In the physical domain, the proposed error estimation strategy accounts not only for discretization errors in the elasticity constraint but also for errors arising from the discretization of the deformation problem used to compute descent directions. The optimization step length is determined adaptively through an estimate of the Lipschitz constant of the stochastic shape derivative. Moreover, existence results and a distributed representation of the stochastic shape derivative are established. Finally, the proposed adaptive stochastic optimization framework is validated on leg-like structural components, demonstrating its effectiveness in minimizing touchdown compliance under uncertain contact forces.
\end{abstract}





\section{Introduction} \label{sec:introduction} 

Structural shape optimization is a powerful technique for enhancing complex scientific and engineering designs by optimizing material layout through variations in boundaries and connectivity within the design domain; see, e.g., \cite{OPironneau_1984,JSokolowski_JPZolesio_1992a} for a comprehensive introduction. In many studies within the literature, input data such as loading conditions or material properties are typically assumed to be deterministic. However, due to factors such as manufacturing processes, unknown loading conditions, variations in material properties, and lack of information, these problems inherently contain various types of uncertainties. These uncertainties may be related to loading, materials, geometry, or model-based factors \cite{RCSmith_2013}. Therefore, it is crucial to account for these uncertainties to ensure a robust and reliable design.

Over the last two decades, researchers have delved into shape and/or topology optimization problems that incorporate uncertainty. In works such as \cite{GAllaire_CDapogny_2014,FDeGournay_GAllaire_FJouve_2008}, methods such as the worst-case approach and fuzzy techniques do not rely on statistical information about the underlying uncertainty; instead, they use a qualitative measure of its magnitude. Findings from these studies indicate that the resulting optimal designs often lead to poor structural performance. Methods that use statistical information represented by the statistical moments of the structural response (e.g., mean and variance) or the tails of probability distributions yield promising results. The former  is known as robust design optimization \cite{HGBeyer_BSendhoff_2007a,BLazarov_MSchevenels_OSigmund_2012a,MTootkaboni_AAsadpoure_JKGuest_2012}, which focuses on creating optimal designs that are less sensitive to variations in the input data. This type of measure is useful if the focus is on achieving good average performance while placing less emphasis on variability.  The latter approach is referred to as reliability-based design optimization
\cite{SConti_HHeld_MPach_MRumf_RSchultz_2011,DWKim_BMKwak_1996}. This type of risk-averse formulation focuses on managing extreme or rare events to achieve a specified level of reliability using probabilistic constraints. However, this is beyond the scope of the present study. Instead, we mainly investigate a numerical approximation of the robust structural shape optimization problem formulated with a (risk-neutral) expectation measure, accounting for  uncertainties in loading and material inputs within a linear elasticity model.

Compared to solving deterministic shape optimization problems, robust shape optimization problems pose more complex challenges due to the difficulty of discretizing  both physical and stochastic domains. Up to now, a great deal of effort has been devoted to the development of efficient methods for solving PDEs with uncertain inputs, based on projection methods (e.g., polynomial chaos \cite{DXiu_GEKarniadakis_2002} and stochastic Galerkin (SG)  \cite{IBabuska_RTempone_GEZouraris_2004a}),  and sampling methods (e.g., Monte Carlo \cite{GSFishman_1996}, stochastic collocation (SC) \cite{IBabuska_FNobile_RTempone_2007a}). In the projection approaches,  one approximates the PDE solution by projecting it onto a  basis that is inexpensive to compute, whereas sample-based approaches generate a finite set of realizations of the PDE solution that are then used to compute statistics or interpolated to build an approximation of the full solution. In the realm of  robust structural shape and topology optimization, we refer to \cite{BLazarov_MSchevenels_OSigmund_2012a} for the stochastic collocation method, \cite{MTootkaboni_AAsadpoure_JKGuest_2012} for the stochastic Galerkin approach, \cite{GAllaire_CDapogny_2015} for the perturbation methods, and \cite{SDe_JHampton_KMaute_ADoostan_2020} for the Monte Carlo technique. Other studies on robust shape optimization include  \cite{SConti_HHeld_MPach_MRumf_RSchultz_2009}, which uses a two-stage stochastic programming algorithm; \cite{JMFrutos_DHPerez_MKessler_FPeriago_2016}, which utilizes an approach based on a dimension-adaptive sparse grid, and \cite{CAudouze_AKlein_AButscher_NMorris_PNair_MYano_2023}, which is based on a non-intrusive anchored ANOVA Petrov-Galerkin projection scheme. Although the polynomial chaos or SC methods, generally,  exhibit better convergence behavior for low-dimensional uncertainties,  a larger number of stochastic dimensions due to the low regularity of random fields may cause a prohibitively large algebraic system, known as the curse of dimensionality.  Therefore, we concentrate on Monte Carlo sampling methods (more generally, sample average approximation), which allow us to effectively represent the stochastic problem as a finite summation, even when dealing with a large number of uncertainties. 

Optimization problems with uncertainty governed by PDEs have been studied using deterministic optimization methods
in combination with sampling or discretization schemes in the stochastic space. However, using a gradient descent approach requires solving $2N$ PDEs (i.e., $N$ state and $N$ adjoint state equations) at each iteration, a cost that is rarely affordable. To reduce computational complexity, several contributions to stochastic optimization have been made, although they remain limited; see, e.g., \cite{CGeiersbach_WWollner_2020a,EHaber_MChung_FHerrmann_2012a,MMartin_SKrumscheid_FNobile_2021a,SCToraman_HYucel_2023} for PDE-constrained optimization  problems and \cite{SDe_KMaute_ADoostan_2020,LJofre_ADoostan_2022} for  shape optimization problems. For  \x{stochastic optimization} problems, it is well-known that the variance of the Monte Carlo estimator of  the quantity of interest (QoI) is inversely proportional to the number of samples $N$ and thus a large sample size leads to a high probability of obtaining an optimal solution for the QoI. However, using a large sample size at each optimization iteration is unnecessarily expensive in terms of computational cost. For this reason, it is useful to employ an adaptive sampling strategy as discussed in  \cite{RBollapragada_RHByrd_JNocedal_2018,RHByrd_GMChin_JNocedal_2012} to keep the sample size as small as possible.  In this methodology, a relatively small sample size is chosen at the beginning, and if the process is likely to produce an improvement in the QoI, the sample size remains unchanged. Otherwise, a larger sample size is selected according to an a posteriori error indicator related to the variance of the sampled QoI. One advantage of this process is that it provides rapid progress in the early stages through the use  of a small initial sample size, whereas a larger sample size  yields high accuracy in the solution as needed. For each sample, it is also essential to account for errors arising from the approximation of the domain and the numerical solution of the PDE constraint, as well as their derivatives within the finite element framework. To minimize computational complexity while preserving high accuracy in the geometric description of the optimal design and the approximation of the objective functional, it is crucial to keep the degrees of freedom (DoF) as low as possible and  to ensure that the shape derivative is strictly negative along the specified descent direction. To address these concerns, \emph{a posteriori} error estimators can provide valuable insights during the optimization process. Several works in the literature have highlighted the considerable potential of a posteriori error estimators for achieving optimal designs in shape and topology optimization challenges. The Zienkiewicz–Zhu error estimator was  employed in \cite{NBanichuk_FJBarthold_AFalk_EStein_1995} to improve the accuracy of solutions arising from the approximation of the underlying differential problem, while utilizing the norm of the design gradient of the Lagrange function as an a posteriori error estimator for optimality conditions. This research was later extended through the dual-weighted residual method to control the PDE error, while the Laplace–Beltrami error indicator was adopted to account for geometric errors associated with domain deformations in \cite{PMorin_RHNochetto_MSPauletti_MVerani_2012}.
Furthermore, we refer to  \cite{MBruggi_MVerani_2011,KNoboru_KYChung_TToshikazu_JETaylor_1986,ASchleupen_KMaute_ERamm_2020} for the application of adaptive mesh refinement in the shape and topology optimization problems. 

The principal contribution of this work lies in the development of an efficient adaptive framework for solving robust structural shape optimization problems governed by a linear elasticity model that incorporates uncertainties in both loading and material parameters. The proposed approach not only determines the sample size adaptively but also employs a posteriori error estimators based on the dual-weighted residual (DWR) method \cite{RBecker_RRannacher_2001a} to enhance the efficiency of the finite element analysis, thereby improving the numerical approximation beyond what can be achieved through fixed mesh refinement. In constructing the a posteriori error estimator, errors arising from the discretization of the deformation bilinear form, which provides a descent direction, are taken into account in addition to those resulting from the discretization of the constraint PDE, namely,  the linear elasticity system. Furthermore, during the mesh deformation process guided by the descent direction, the step length within the gradient-based optimization procedure is adaptively adjusted by estimating the Lipschitz constant, thereby promoting stable and efficient convergence.

The remainder of this paper is organized as follows. In Section~\ref{sec:model}, we introduce the linear elasticity problem formulated as a PDE constraint, together with the compliance objective functional subject to a volume fraction constraint, under suitable assumptions and notation. The shape differentiability of the expected compliance objective and the derivation of its shape derivative, subject to a penalized volume constraint, are presented in Section~\ref{sec:shape_diff}.  Section~\ref{sec:discretization} discusses the numerical techniques used to discretize the infinite-dimensional optimization problem, thereby yielding its finite-dimensional approximation, and describes the corresponding optimization procedures.  In addition, the a posteriori error estimators used to adaptively control the sample size, mesh refinement, and optimization step length are introduced.  Section~\ref{sec:algorithm} then presents the proposed adaptive algorithm, which incorporates the dual-weighted residual (DWR) method for multiple goal functionals and dynamically updates the sample size. To demonstrate the effectiveness and robustness of the proposed estimation-based adaptive stochastic optimization framework, Section~\ref{sec:numeric} presents a set of numerical experiments for a benchmark compliance minimization problem with randomly generated input data. The focus is on the shape optimization of leg-like structural components aimed at minimizing compliance at touchdown under uncertain contact-force conditions. The performance of the algorithm is examined in three settings: (i) a randomness study conducted on a fixed mesh, (ii) a mesh refinement study using full Monte Carlo sampling, and (iii) a fully adaptive framework that combines  adaptive sampling and adaptive mesh refinement. Finally, concluding remarks and potential directions for future research are discussed in Section~\ref{sec:conclusion}.

\section{Model problem} \label{sec:model} 

Throughout this paper, we adhere to the conventional notation for Sobolev spaces $W^{m,p}(\cD)$, with $m \in \mathbb{N}_0$ and $1 \leq p \leq \infty$, as outlined in standard references such as \cite{SCBrenner_LRScott_2008,PGCiarlet_1978a}, equipped with the norm $\| \cdot \|_{m,p,\cD}$ and the seminorm  $| \cdot |_{m,p,\cD}$ on an open and bounded polygonal  physical domain $\cD \subset \R^2$ with \x{Lipschitz} boundary $\partial \cD$. In particular, we denote $W^{m,2}(\cD)$ by $H^m(\cD)$ and the corresponding norm and seminorm  by $\| \cdot \|_{m,\cD}$ and $| \cdot |_{m,\cD}$, respectively. The spaces of square-integrable functions on $\cD$ and $\partial \cD$ are denoted by $L^2(\cD)$ and $L^2(\partial \cD)$, respectively, with norms $\| \cdot \|_{\cD}$ and $\| \cdot \|_{\partial \cD}$. \x{Throughout, $\|\cdot\|$ is the Euclidean norm on finite-dimensional Euclidean spaces, and for $x, y\in \R^n$ $x \cdot y = x^T y$ is the associated inner product.} $C^k(\R^2, \R^2)$  represents the space of $k$-times continuously differentiable vector-valued functions and $C>0$ is a generic constant independent of any discretization parameters.  A complete probability space is denoted by $(\Omega, \mathfrak{F}, \mathbb{P})$, where $\Omega$ represents the set of all possible outcomes $\omega \in \Omega$, $\mathfrak{F}$ is a $\sigma$-algebra of measurable events, and $\mathbb{P} : \mathfrak{F} \rightarrow [0,1]$ denotes the associated probability measure. For a random variable $v : \Omega \rightarrow H^k(\cD)$, the Bochner space $L^p(\Omega; H^k(\cD))$ is defined as
\begin{equation*}
\begin{split}
L^p(\Omega; H^k(\mathcal{D})) := \Big\{\, v : \Omega \to H^k(\mathcal{D}) \; :\;
&v \text{ is strongly measurable, } \\
&\|v\|_{L^p(\Omega; H^k(\mathcal{D}))} < \infty \,\Big\},
\end{split}
\end{equation*}
where
\[
\|v\|_{L^p(\Omega; H^k(\cD))} =
\begin{cases}
\left( \displaystyle\int_{\Omega} \|v(\omega)\|^p_{k, \cD} \, d\mathbb{P}(\omega) \right)^{1/p}, & \text{for } 1 \leq p < \infty, \\[1.2em]
\displaystyle\esssup_{\omega \in \Omega} \|v(\omega)\|_{k, \cD}, & \text{for } p = \infty.
\end{cases}
\]
Additionally, for a real--valued random field  $\varkappa: \cD \times \Omega \rightarrow \mathbb{R}$ defined on the probability space $(\Omega, \mathfrak{F}, \mathbb{P})$, the expected value (mean) and  covariance functions are defined, respectively, by \x{
\begin{align*}
\mathbb{E}[\varkappa](x) &:= \int_{\Omega} \varkappa(x, \omega) \, d\mathbb{P}(\omega), \qquad x \in \cD, \\
\textnormal{Cov}[\varkappa](x, \widetilde{x}) &:= \int_{\Omega} \big(\varkappa(x, \omega) - \mathbb{E}[\varkappa](x)\big) \big(\varkappa(\widetilde{x}, \omega) - \mathbb{E}[\varkappa](\widetilde{x})\big) \, d\mathbb{P}(\omega), \;\; x, \widetilde{x} \in \cD. 
\end{align*}
Setting $\widetilde{x} = x$, one obtains the variance $\mathbb{V}[\varkappa](x) = \textnormal{Cov}[\varkappa](x, x)$ and the corresponding standard deviation $\kappa_{\varkappa}(x) = \sqrt{\mathbb{V}[\varkappa](x)}$.
}

\subsection{PDE constraint: linear elasticity}

A linear elasticity system that incorporates uncertainties in both the loading and material parameters within the domain $\cW \subset \R^2$ is formulated as follows:
\begin{subequations}\label{eqn:elastic_PDE}
\begin{align}
    -\Div\!\big(A(x,\omega)\, \nabla^s u(x,\omega)\big) &= f(x,\omega) && \text{in } \cW \times \Omega, \label{eqn:elastic_eq}\\
    u(x,\omega) &= 0 && \text{on } \Gamma_d \times \Omega, \label{eqn:dirichlet_bc}\\
    \big(A(x,\omega) \nabla^s u(x,\omega)\big) n &= g(x,\omega) && \text{on } \Gamma_n \times \Omega, \label{eqn:neumann_bc}\\
    \big(A(x,\omega) \nabla^s u(x,\omega)\big) n &= 0 && \text{on } \Gamma_f \times \Omega, \label{eqn:free_bc}
\end{align}
\end{subequations}
where $\Gamma_d$ and $\Gamma_n$ are fixed, disjoint subsets of the boundary $\partial \cW$, representing the Dirichlet and Neumann boundaries, respectively.  The remaining part of the boundary,
\[
\Gamma_f := \partial \cW \setminus (\Gamma_d \cup \Gamma_n),
\]
denotes the \emph{free boundary}, which is the only part subject to optimization during the shape update process. Here, $f: \cW \times \Omega \rightarrow \R^2$ is the body force, $g: \Gamma_n \times \Omega \rightarrow \R^2$ is the traction (Neumann) force, $u: \cW \times \Omega \rightarrow \R^2$ denotes the displacement field, $\nabla^s u := \frac{1}{2}(\nabla u + (\nabla u)^T) : \cW \times \Omega \rightarrow \R^{2 \times 2}$ is the symmetric strain tensor field, $A$ represents the Hooke elasticity tensor,  $A\, \nabla^s u:  \cW \times \Omega \rightarrow \R^{2 \times 2}$ is the stress tensor field, and $n$ denotes the unit outward normal vector to $\partial \cW$. Further, we note that the divergence operator, $\Div$, and the gradient operator, $\nabla$, always refer to differentiation with respect to the spatial variable $x$, unless otherwise stated.

In the optimization process, handling the variable domain $\cW$ poses significant computational challenges. To circumvent these difficulties, the computational problem is reformulated on a fixed reference domain $\cD$ using the \emph{ersatz material} approach \cite{GAllaire_FJouve_AMToader_2004,MYWang_XWang_DGuo_2003a}. In this framework, the strong (solid) material phase is distinguished from the void (or weak) material phase by defining the Hooke elasticity tensor as
\begin{equation}\label{eqn:hook_ersatz}
    A_\cW = A\, \chi_\cW + \epsilon A\, \chi_{\cD \setminus \cW},
\end{equation}
where $\epsilon$ is a prescribed small parameter satisfying $0 < \epsilon \ll 1$, and $\chi_{\cW}$ denotes the characteristic (indicator) function of the domain $\cW$. Here, $A$ is assigned to the strong phase $\cW \subset \cD$,  whereas $\epsilon  A$ is assigned to the weak phase $\cD \setminus \cW$. For any symmetric matrix $\zeta$, Hooke’s law for a linear isotropic elastic material is characterized by
\[
    A(x,\omega)\, \zeta = 2\,\mu(x,\omega)\, \zeta + \lambda(x,\omega)\, \Tr(\zeta)\, \mathrm{Id},
\]
where  $\mu(x,\omega)$ and $\lambda(x,\omega)$ are the Lamé moduli of the material, assumed to depend on both $x \in \cD$ and $\omega \in \Omega$, $\Tr(\cdot)$ denotes the trace operator, and $\mathrm{Id}$ is the $2 \times 2$ identity matrix. In a similar manner, the ersatz material approach is applied to the body force $f$, which is extended to the fixed domain $\cD$. In this approach, the optimization process is still performed with respect to the variable set $\cW$, which is embedded within the fixed larger computational domain $\cD$.  Accordingly, the linear elasticity system is reformulated as
\begin{subequations}\label{eqn:elastic_PDE_ersatz}
\begin{align}
    -\Div\!\big(A_{\cW}(x,\omega)\, \nabla^s u(x,\omega)\big) &= f_{\cW}(x,\omega) && \text{in } \cD \times \Omega, \label{eqn:ersatz_eq}\\
    u(x,\omega) &= 0 && \text{on } \Gamma_d \times \Omega, \label{eqn:ersatz_dir}\\
    \big(A_{\cW}(x,\omega) \nabla^s u(x,\omega)\big) n &= g(x,\omega) && \text{on } \Gamma_n \times \Omega, \label{eqn:ersatz_neu}\\
    \big(A_{\cW}(x,\omega) \nabla^s u(x,\omega)\big) n &= 0 && \text{on } \big(\partial \cD \backslash (\overline \Gamma_d \cup \overline \Gamma_n) \big) \times \Omega, \label{eqn:ersatz_free}
\end{align}
\end{subequations}
where $\cD$ is a domain such that  $\Gamma_d \cup \Gamma_n \subset \partial \cD$ and  the interface between the strong and weak material phases is given by $\partial \cW$, which constitutes the only free (i.e., design-dependent) interface. For sufficiently small values of $\epsilon$, the solution of \eqref{eqn:elastic_PDE_ersatz}  corresponds to an approximation of the solution to \eqref{eqn:elastic_PDE}; see also \cite{MDambrine_DKateb_2010} for discussions on stability and numerical consistency issues.

To ensure the well-posedness of the displacement field  $u: \cD  \times \Omega \ra \R^2$ in \eqref{eqn:elastic_PDE_ersatz},  we require the following assumptions on the given random input data.

\begin{assumption}\label{assump:material_coeff}
Let the Lamé parameters $\lambda(x,\omega)$ and $\mu(x,\omega)$ belong to the space $L^{\infty}(\Omega; L^{\infty}(\cD))$.  
There exist positive constants $\mu_{\textnormal{min}}, \mu_{\textnormal{max}}, \lambda_{\textnormal{min}}, \lambda_{\textnormal{max}} \in (0, \infty)$ such that
\[
    \mu_{\textnormal{min}} \leq \mu(x,\omega) \leq \mu_{\textnormal{max}}
    \quad \text{and} \quad
    \lambda_{\textnormal{min}} \leq 2\mu(x,\omega) + 2\lambda(x,\omega) \leq \lambda_{\textnormal{max}}
\]
for almost every $(x,\omega) \in \cD \times \Omega$.
\end{assumption}

\begin{assumption}\label{assump:force_funcs}
The stochastic body force $f_{\cW}(x,\omega)$ and boundary traction $g(x,\omega)$ possess continuous and bounded covariance functions and satisfy
\[
    f_{\cW} \in L^2(\Omega; L^2(\cD)^2)
    \quad \text{and} \quad
    g \in L^2(\Omega; L^2(\Gamma_n)^2).
\]
\end{assumption}

\noindent Then, introducing  the Hilbert space $H^1_d(\cD)^2 := \{ v \in H^1(\cD)^2: \, v =0  \; \hbox{on} \; \Gamma_d \}$
equipped with the $H^1(\cD)^2$-norm, we state the weak formulation of the elasticity problem \eqref{eqn:elastic_PDE_ersatz}  as follows: find $u \in L^2(\Omega; H^1_d (\cD)^2)$ such that 
\begin{equation}\label{eqn:weak_form_continuous}
    a[u,v] = [f_\cW, v] + [g, v]_{\Gamma_n}
    \qquad \forall  v \in L^2(\Omega; H^1_d(\cD)^2),
\end{equation}
where 
\begin{align*}
    a[u,v] &:= \int_\Omega \int_\cD A_\cW(x,\omega)\, \nabla^s u(x,\omega) : \nabla^s v(x,\omega) \, dx \, d\mathbb{P}(\omega), \\[0.5em]
    [f_\cW, v] &:= \int_\Omega \int_\cD f_\cW(x,\omega) \cdot v(x,\omega) \, dx \, d\mathbb{P}(\omega), \\[0.5em]
    [g, v]_{\Gamma_n} &:= \int_\Omega \int_{\Gamma_n} g(x,\omega) \cdot v(x,\omega) \, ds \, d\mathbb{P}(\omega),
\end{align*}
and the notation $ : $ denotes the Frobenius inner product of tensors. Given the assumptions outlined in Assumptions \ref{assump:material_coeff} and \ref{assump:force_funcs}, the Lax-Milgram theorem guarantees  that the problem described by \eqref{eqn:elastic_PDE_ersatz} is well-posed; see, e.g., \cite{JMFrutos_DHPerez_MKessler_FPeriago_2016}.

\subsection{Objective functional: compliance}

In this study, we consider a robust structural shape optimization problem that accounts for uncertainties in both the loading and material parameters.  
The problem is formulated as
\begin{equation}\label{eqn:objective}
    \min_{\cW \in \mathcal{U}^{\textup{ad}}} \; \E[J(\cW, \omega)]
    \quad \text{subject to} \quad \eqref{eqn:elastic_PDE_ersatz},
\end{equation}
where $J : \mathcal{U}^{\textup{ad}} \times \Omega \rightarrow \R$ denotes the objective functional such that the mapping $\omega \mapsto J(\cW, \omega)$ is $\mathbb{P}$-integrable and well-defined for all admissible domains $\cW \in \mathcal{U}^{\textup{ad}}$,  \x{where
$\mathcal{U}^{\textup{ad}}$ denotes the set of admissible domains contained in $\cD$.}

Before discussing the objective functional and the set of admissible shapes in \eqref{eqn:objective}, we recall the notion of the $\varepsilon$-cone property \cite{DChenais_1975}.

\begin{definition}[$\varepsilon$-cone property]
An open set $\cW \subset \R^2$ is said to satisfy the $\varepsilon$-cone property if, for every point $x \in \partial \cW$, there exists a unit vector $q \in \R^2$  such that
\[
    \mathcal{C}(y, q, \varepsilon) \subset \cW 
    \qquad \forall y \in \overline{\cW} \cap B(x, \varepsilon),
\]
where the open cone $\mathcal{C}(y, q, \varepsilon)$ is defined by
\[
    \mathcal{C}(y, q, \varepsilon)
    := \Big\{ z \in \R^2 : (z - y)\cdot q  \geq |z - y| \cos \varepsilon, \;
       0 < |z - y| < \varepsilon \Big\}.
\]
\end{definition}

\noindent We now define the class of admissible domains $\mathcal{U}^{\textup{ad}}$ as
\begin{equation}\label{eqn:admissible}
    \mathcal{U}^{\textup{ad}}
    = \left\{
        \cW \in \cW_{\varepsilon} :
        \Gamma_d \cup \Gamma_n \subset \partial \cW, \;
        0 < |\cW| = \varsigma\, |\cD|
      \right\},
\end{equation}
where
$
    \cW_{\varepsilon}
    := \left\{
        \cW \subset \cD \; \text{open} :
        \cW \text{ satisfies the } \varepsilon\text{-cone property}
      \right\},
$
and $\varsigma \in (0,1)$ denotes the prescribed volume fraction, that is, the ratio between the domain measure $|\cW| := \int_{\cW} dx$ and the working domain volume $|\cD|$.

For a given domain $\cW \in \mathcal{U}^{\textup{ad}}$ and each realization $\omega \in \Omega$, we define the objective functional $J$ in \eqref{eqn:objective} as the commonly used output functional, namely, the \emph{compliance}
\begin{equation}\label{eqn:compliance}
    J(\cW, \omega)
    = \int_{\cD} f_{\cW}(x,\omega) \cdot u(x,\omega) \, dx
      + \int_{\Gamma_n} g(x,\omega) \cdot u(x,\omega) \, ds,
\end{equation}
where $u(x,\omega)$ denotes the weak solution of the elasticity system \eqref{eqn:elastic_PDE_ersatz} corresponding to the realization $\omega \in \Omega$. Accordingly, the robust shape optimization problem is formulated as
\begin{equation}\label{eqn:optimization_constraint}
    \cW^* = \operatorname*{arg\,min}_{\cW \in \mathcal{U}^{\textup{ad}}}
    \, \mathcal{J}(\cW)
    := \E[J(\cW, \omega)].
\end{equation}
Under the $\varepsilon$-cone assumption on the admissible set $\mathcal{U}^{\textup{ad}}$, the existence of a solution to the optimization problem \eqref{eqn:optimization_constraint} can be established; see, e.g., \cite[Theorem~2.1]{JMFrutos_DHPerez_MKessler_FPeriago_2016}.  
To enforce the volume constraint  in practice, a penalization approach is employed, leading to the following formulation:
\begin{equation}\label{eqn:optimization_penalized}
    \min_{\cW \subset \cD} \;
    \mathcal{J}^P(\cW)
    = \E[J^P(\cW, \omega)],
\end{equation}
where the penalized objective functional is defined as
\vspace{-2mm}
\begin{equation}\label{eqn:optimization_penalized_fixed}
    J^P(\cW, \omega)
    = J(\cW, \omega)
      + \frac{\Lambda}{2}
        \left( \varsigma - \frac{|\cW|}{|\cD|} \right)^{2},
\end{equation}
with $\Lambda > 0$ denoting the penalty parameter.


\section{Stochastic shape derivative}\label{sec:shape_diff}

In the gradient-based minimization of objective functions, we follow the classical notion of Hadamard's boundary variation method (see, e.g., \cite{JSokolowski_JPZolesio_1992a}) to define perturbed shapes from an initial shape $\mathcal{W} \in \mathcal{U}^{\textup{ad}}$.

\x{Let $L \subset \partial\mathcal D$ denote the set of singular boundary points of $\partial\mathcal D$ at which the outward unit normal vector
$n$ is not uniquely defined, such as corners and kinks. For $k \ge 1$, we define the space of admissible deformation fields by
\[
\Theta^k(\mathcal D)
:=
\Bigl\{
\theta \in C^k(\overline{\mathcal D};\mathbb R^2)
:
\theta \cdot n = 0
\ \text{on}\ \partial\mathcal D \setminus L,
\;\;
\theta = 0
\ \text{on}\ L \cup \Gamma_d \cup \Gamma_n
\Bigr\},
\]
equipped with the topology induced by $C^k(\overline{\mathcal D};\mathbb R^2)$. The condition $\theta \cdot n = 0$ on $\partial\mathcal D \setminus L$, together with $\theta = 0$ on $L$, ensures that the flow preserves the hold-all domain. In addition, the condition $\theta = 0$ on $\Gamma_d \cup \Gamma_n$ prevents any deformation of the prescribed boundary portions, so that only the free boundary is allowed to evolve.}

Each vector field $\theta\in\Theta^k(\mathcal D)$ is assumed to admit a $C^k$-extension to an open neighborhood of $\overline{\mathcal D}$,
which, by abuse of notation, is still denoted by $\theta$. For a vector field $\theta\in\Theta^k(\mathcal D)$, let $\Phi_t$ denote the flow generated by
\[
\dot x(t)=\theta(x(t)),
\qquad
x(0)=x_0.
\]
The perturbed domain is then defined by
\[
\mathcal W_t:=\Phi_t(\mathcal W).
\]
For sufficiently small $t>0$, the mapping $\Phi_t$ is a $C^k$-diffeomorphism in a neighborhood of
$\overline{\mathcal D}$, and consequently $\overline{\mathcal W_t}\subset\overline{\mathcal D}$; see \cite[Theorem~2.16]{JSokolowski_JPZolesio_1992a}.

Now we are ready to introduce the definition of the shape derivative for a fixed realization $\omega \in \Omega$.

\begin{definition}\label{defn:shape_derivative}
\x{Let $\cW \subset  \R^2$ be an open set and let $\omega\in\Omega$ be fixed. The Eulerian semiderivative of a functional
$J(\cdot,\omega)$ at $\cW$ with $|\cW|\neq0$ in the direction $\theta\in\Theta^k(\cD)$ is defined, whenever the limit exists, by
\[
\diff J(\cW,\omega)(\theta)
:=
\lim_{t\to0^+}
\frac{
J(\Phi_t(\cW),\omega)
-
J(\cW,\omega)
}{t}.
\]
If, for every direction $\theta\in\Theta^k(\cD)$, the derivative exists and the mapping
\[
\Theta^k(\cD)\ni\theta
\longmapsto
\diff J(\cW,\omega)(\theta)\in\R
\]
is linear and continuous, then the functional $J(\cdot,\omega)$ is said to be shape differentiable at $\cW$.}
\end{definition}

Under the above definition, the objective functional in \eqref{eqn:optimization_penalized}, namely $\mathcal{J}^P(\cW)= \E[J^P(\cW,\omega)]$, is shape differentiable at $\cW$ provided that $J^P(\cdot,\omega)$ is shape differentiable at $\cW$ for almost every $\omega \in \Omega$, and \x{the associated
difference quotient is dominated by a $\mathbb{P}$-integrable function uniformly for $t>0$ sufficiently close to $0$; see, Theorem~\ref{thm:shape_differentiable}.} In this case, the derivative satisfies
\begin{equation*}
    \diff \mathcal{J}^P(\cW)(\theta)
    =
    \E[\diff J^P(\cW,\omega)(\theta)],
    \qquad
    \forall \theta \in \Theta^k(\cD),
\end{equation*}
see, for instance, \cite[Lemma~2.14]{CGeierbach_ELRomero_KWelker_2021}.

Next, we compute the shape derivative of the  penalized objective functional \eqref{eqn:optimization_penalized_fixed}, that is, $J^P(\cW,\omega)$ by employing the averaged-adjoint  and Lagrangian methods introduced in \cite{KSturm_2015} for a fixed but arbitrary realization $\omega \in \Omega$. \x{The following result is  obtained by applying the deterministic shape-calculus framework of \cite[Section~3.3]{ALaurain_2018} to each realization $\omega \in \Omega$ and is included for completeness, as it provides the quantities required for the adaptive error estimation and optimization procedures developed later.}

\begin{proposition}\label{prop:derivative_penalizedobjective}
    For a fixed realization $\omega \in \Omega$, the shape derivative of the penalized objective functional $J^P(\cW,\omega)$ in \eqref{eqn:optimization_penalized_fixed}  is given by
    \begin{equation} \label{eqn:derivative_penalizedobjective}
        \diff J^P(\cW, \omega)(\theta) = \underbrace{\int_\cD \big( S_\omega : \nabla \theta \big) \, dx}_{\diff J(\cW, \omega)(\theta)} - \widetilde{\Lambda} \int_\cW \Div \theta \, dx,
    \end{equation}
    for all $\theta\in\Theta^k(\cD)$. Here,
    \begin{align*}
        S_\omega :=& \, 2 \nabla u(\cdot,\omega)^T A_\cW \nabla^s u(\cdot,\omega) + \big(2f_\cW \cdot u(\cdot,\omega) - A_\cW \nabla^s u(\cdot,\omega) : \nabla^s u(\cdot,\omega) \big)\mathrm{Id}, \\
        \widetilde{\Lambda} :=& \, \frac{\Lambda}{|\cD|} \left(\varsigma  - \frac{|\cW|}{|\cD|}\right),
    \end{align*}
     and  \x{$u(\cdot,\omega)$ denotes the weak solution of \eqref{eqn:elastic_PDE_ersatz} posed on the fixed computational domain $\cD$ corresponding to the design domain $\cW$ and the realization $\omega\in\Omega$.}
\end{proposition}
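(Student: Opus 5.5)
We will follow the averaged-adjoint (Lagrangian) approach of \cite{KSturm_2015} in the form used in \cite[Section~3.3]{ALaurain_2018}, for a fixed $\omega\in\Omega$. The derivative splits into two pieces. The penalty term is handled directly. The compliance term is handled through a perturbed Lagrangian pulled back to the fixed domain $\cD$ via $\Phi_t$.

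\emph{Penalty term.} Since $|\cW_t| = \int_\cW \det D\Phi_t \,dx$ and $\frac{d}{dt}\det D\Phi_t|_{t=0}=\Div\theta$, the chain rule gives
\[
\frac{d}{dt}\Big|_{t=0}\frac{\Lambda}{2}\Big(\varsigma-\frac{|\cW_t|}{|\cD|}\Big)^2
=-\frac{\Lambda}{|\cD|}\Big(\varsigma-\frac{|\cW|}{|\cD|}\Big)\int_\cW\Div\theta\,dx ,
\]
which is exactly the $-\widetilde\Lambda$ term.

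\emph{Compliance term: the perturbed Lagrangian.} Since $\Phi_t$ maps $\cD$ onto itself, we transport test and trial functions by $\varphi\mapsto\varphi\circ\Phi_t^{-1}$. We use the fact that the ersatz data move with the phase interface, i.e.\ $A_{\cW_t}\circ\Phi_t=A_\cW$ and $f_{\cW_t}\circ\Phi_t=f_\cW$ (the material is convected with the shape). This is the point where some care about the $x$-dependence of the Lamé moduli is needed, and we take it as the modelling convention of the framework. With $\xi(t)=\det D\Phi_t$ and $\varepsilon_t(\varphi)=\mathrm{sym}(\nabla\varphi\, D\Phi_t^{-1})$, define
\[
G(t,\varphi,\psi)=\int_\cD f_\cW\cdot\varphi\,\xi(t)+\int_{\Gamma_n}g\cdot\varphi
+\int_\cD A_\cW\varepsilon_t(\varphi):\varepsilon_t(\psi)\,\xi(t)-\int_\cD f_\cW\cdot\psi\,\xi(t)-\int_{\Gamma_n}g\cdot\psi .
\]
The boundary integrals on $\Gamma_n$ carry no $t$-dependence because $\theta=0$ there.

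\emph{Adjoint and derivative of $G$.} Because the compliance is self-adjoint, the (averaged) adjoint at $t=0$ is $p=-u$. To justify applying Sturm's theorem, we must check its hypotheses: existence and $H^1$-continuity in $t$ of the averaged adjoint $p_t$, and differentiability of $t\mapsto G(t,\varphi,\psi)$. These follow from the uniform coercivity given by Assumption~\ref{assump:material_coeff}, the smoothness of $t\mapsto D\Phi_t^{-1}$ and $\xi(t)$ in $L^\infty$, and Lax--Milgram. This verification is the main technical obstacle; we would argue it as in \cite[Section~3.3]{ALaurain_2018}, noting that $p_t=-(u_t+u)/2$ in the compliance case. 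Granting this, Sturm's theorem gives $\diff J(\cW,\omega)(\theta)=\partial_tG(0,u,-u)$. Using $\partial_t\xi|_0=\Div\theta$ and $\partial_t\varepsilon_t(\varphi)|_0=-\mathrm{sym}(\nabla\varphi\nabla\theta)$, we obtain
\[
\partial_tG(0,u,-u)=\int_\cD 2f_\cW\cdot u\,\Div\theta
-\int_\cD A_\cW\nabla^su:\nabla^su\,\Div\theta
+2\int_\cD A_\cW\nabla^su:\mathrm{sym}(\nabla u\nabla\theta).
\]

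\emph{Assembling the tensor.} By symmetry of $A_\cW\nabla^s u$, the last integrand equals $(\nabla u^TA_\cW\nabla^su):\nabla\theta$. Moreover, $\Div\theta=\mathrm{Id}:\nabla\theta$. Collecting terms yields $\int_\cD S_\omega:\nabla\theta\,dx$. Finally, the map $\theta\mapsto\diff J^P(\cW,\omega)(\theta)$ is linear and continuous in the $C^1$ topology, since $S_\omega\in L^1(\cD)$, which confirms shape differentiability in the sense of Definition~\ref{defn:shape_derivative}.
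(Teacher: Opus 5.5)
Your proposal is correct and follows essentially the same route as the paper's appendix proof. The paper uses the identical pulled-back Lagrangian on $\cD$ and the adjoint $z=-u$. It then evaluates $\partial_t\mathcal{L}(0,u,-u)$ via $\zeta'(0)=\Div\theta$ and $\partial_t\varepsilon(0,u)=-\tfrac12\big(\nabla u\nabla\theta+(\nabla u\nabla\theta)^T\big)$, and uses the distributed volume derivative $\int_\cW\Div\theta\,dx$. You are more careful than the paper's formal argument in explicitly flagging the convention $A_{\cW_t}\circ\Phi_t=A_\cW$ and the hypotheses of Sturm's theorem.

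One small slip: because your $G$ is linear in $\varphi$, the averaged adjoint is $p_t=-\widetilde u_t$ with $\widetilde u_t=u_t\circ\Phi_t$, not $-(u_t+u)/2$. That formula belongs to a cost quadratic in the state, such as $\tfrac12\int_\cD A_\cW\nabla^s u:\nabla^s u\,dx$. The slip does not change $p_0=-u$, the continuity needed for Sturm's theorem, or the final formula.
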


\noindent {\it Proof} We refer to Appendix~\ref{appendix:derivative_penalizedobjective} for the proof of Proposition~\ref{prop:derivative_penalizedobjective}.
\qed

\begin{remark}
The expression in \eqref{eqn:derivative_penalizedobjective} is commonly referred to as the distributed, volumetric, or domain representation of the shape derivative. 
Under standard regularity assumptions, the shape derivative depends only on the normal component $\theta \cdot n$ on the interface $\partial \cW$; 
see \cite[pp.~480--481]{MCDelfour_JPZolesio_2011} for further details.  Although the boundary (Hadamard) form of the shape derivative is frequently employed 
in the development of level-set based numerical algorithms, we adopt the domain representation in \eqref{eqn:derivative_penalizedobjective} because of the favorable performance of volumetric formulations when combined with finite element discretization (see, e.g., \cite{RHiptmair_APaganini_SSargheini_2015}). \x{Further, since the shape derivative is employed only in its distributed form over the fixed hold-all domain $\cD$, no boundary representation of the shape derivative is required, and therefore no additional smoothness assumptions beyond those already imposed on $\partial\cW$ are needed.}
\end{remark}

We now establish the shape differentiability of the functional $\mathcal{J}^P(\cW)$ in \eqref{eqn:optimization_penalized} 
by adapting the techniques developed in \cite[Theorem~3.6]{CGeierbach_ELRomero_KWelker_2021}, where the Laplace equation  serves as the governing PDE constraint.

\begin{theorem}\label{thm:shape_differentiable}
    \x{The functional $\mathcal J^P$ defined in \eqref{eqn:optimization_penalized} is shape differentiable at every design $\cW$ for which the state problem is well-posed.}
\end{theorem}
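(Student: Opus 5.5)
The plan is to reduce the stochastic statement to the pathwise one and pass the limit under the expectation by dominated convergence, as in \cite[Theorem~3.6]{CGeierbach_ELRomero_KWelker_2021}. Fix $\theta\in\Theta^k(\cD)$ and set $\cW_t=\Phi_t(\cW)$. By Proposition~\ref{prop:derivative_penalizedobjective}, for almost every $\omega$ the limit $\lim_{t\to0^+}\bigl(J^P(\cW_t,\omega)-J^P(\cW,\omega)\bigr)/t$ exists and equals \eqref{eqn:derivative_penalizedobjective}. The volume penalty is deterministic and smooth in $t$, since $|\cW_t|=\int_\cW\det D\Phi_t\,dx$. Hence only the compliance part needs a uniform integrable bound of the form
\[
\Bigl|\tfrac{J(\cW_t,\omega)-J(\cW,\omega)}{t}\Bigr|\le C\bigl(\|f_\cW(\cdot,\omega)\|_{\cD}^2+\|g(\cdot,\omega)\|_{\Gamma_n}^2\bigr)
\]
for all $t\in(0,t_0]$. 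Once this holds, Assumption~\ref{assump:force_funcs} makes the right-hand side $\mathbb P$-integrable, and Lebesgue's theorem gives $\diff\mathcal J^P(\cW)(\theta)=\E[\diff J^P(\cW,\omega)(\theta)]$.

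It then remains to check that this map is linear and continuous on $\Theta^k(\cD)$. Linearity is inherited pathwise. For continuity, I bound $|S_\omega|$ in $L^1(\cD)$ by $C(\|u(\cdot,\omega)\|_{1,\cD}^2+\|f_\cW\|_{\cD}\|u\|_{\cD})$. The Lax--Milgram estimate, with constants from Assumption~\ref{assump:material_coeff} and hence independent of $\omega$, gives $\|u(\cdot,\omega)\|_{1,\cD}\le C(\|f_\cW\|_{\cD}+\|g\|_{\Gamma_n})$. Together these yield $|\diff\mathcal J^P(\cW)(\theta)|\le C\|\theta\|_{C^1}$.

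The main obstacle is the uniform difference-quotient bound. Here I would use the pathwise transported formulation. Pull the state on $\cW_t$ back to $\cD$ via $u^t=u_t\circ\Phi_t$. It solves $\int_\cD A^t_\cW(\omega)\nabla^s$-type forms $=\int f^t\cdot v+\int_{\Gamma_n} g\cdot v$, where $A^t$ involves $D\Phi_t^{-1}$ and $\det D\Phi_t$, and $\Gamma_n$ is fixed because $\theta=0$ there. For $t\le t_0$, $\|D\Phi_t-\mathrm{Id}\|_\infty\le Ct\|\theta\|_{C^1}$, so the transported forms stay uniformly coercive with constants depending only on $\mu_{\min},\lambda_{\min}$ and $\theta$. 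This gives $\|u^t\|_{1,\cD}\le C(\|f\|+\|g\|)$ uniformly in $t,\omega$.

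Next I subtract the equations for $u^t$ and $u$ and test with $u^t-u$. The coefficient perturbation is $O(t)$ in $L^\infty$, so $\|u^t-u\|_{1,\cD}\le Ct(\|f\|+\|g\|)$. For the body force I will need the extended $f_\cW$ to be transported compatibly, namely $f\circ\Phi_t\det D\Phi_t-f=O(t)$ in $L^2$; if this fails for merely $L^2$ data, I would assume $f$ to be $\omega$-wise $H^1$ or independent of $x$ near the moving interface. Finally, writing the compliance as $J(\cW_t,\omega)=a_t(u^t,u^t)$ and expanding $a_t(u^t,u^t)-a(u,u)$ bounds the quotient by $C(\|f\|^2+\|g\|^2)$, as required.
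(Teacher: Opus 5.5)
Your proposal is correct and follows essentially the same route as the paper. Both arguments take pathwise differentiability from Proposition~\ref{prop:derivative_penalizedobjective}, use uniform coercivity of the pulled-back elasticity problem to obtain $\|u_t\circ\Phi_t-u\|_{H^1(\cD)^2}\le Ct\,(\|f_\cW\|+\|g\|)$, build an integrable majorant of the form $C(\|f_\cW\|+\|g\|)^2$ plus a deterministic penalty term, and conclude by dominated convergence (the paper's appeal to Lemma~2.14 of Geiersbach et al.).

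The differences are cosmetic:
\begin{itemize}
\item The paper derives the $O(t)$ state estimate from strong convexity of an energy functional and a mean-value argument, instead of subtracting the two equations.
\item It sidesteps your body-force caveat by simply postulating $f_{\cW_t}\circ\Phi_t=f_\cW$. Likewise, it uses $A_\cW$ directly in the transported form, which your claim that the coefficient perturbation is $O(t)$ in $L^\infty$ tacitly needs as well.
\item It leaves linearity and continuity of $\theta\mapsto\E[\diff J^P(\cW,\omega)(\theta)]$ to the cited lemma, whereas you check them explicitly.
\end{itemize}
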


\noindent {\it Proof}
Let $u_t(\omega) \in H^1_d(\cD)^2$  be the weak solution of \eqref{eqn:elastic_PDE_ersatz} with $\cW$ replaced by $\cW_t$. In view of Proposition~\ref{prop:derivative_penalizedobjective}, it remains to verify the domination condition of \cite[Lemma~2.14]{CGeierbach_ELRomero_KWelker_2021}, namely,
\x{
\begin{equation*}
        \left | \Psi_t(\omega) \right|= \left| \frac{J^P(\cW_t,\omega) - J^P(\cW,\omega)}{t} \right| \leq R(\omega) \quad \forall  t \in (0, \tau], 
\end{equation*}
where $R:\Omega\to[0,\infty)$ is $\mathbb P$-integrable.}
 
Applying the change of variables $x \mapsto \Phi_t(x)$, which allows the relation $f_{\cW} = f_{\cW_t} \circ \Phi_t$ to be used, we obtain
\begin{align*}
    t\Psi_t(\omega)
    =&
    \int_\cD f_\cW(\omega)\cdot\widetilde{u}_t(\omega)\,\zeta(t)\,dx
    -
    \int_\cD f_\cW(\omega)\cdot u(\omega)\,dx  \\
    &+
    \int_{\Gamma_n}
    g(\omega)\cdot
    \big(\widetilde{u}_t(\omega)-u(\omega)\big)\,ds \\
    &+
    \frac{\Lambda}{2}
    \left[
    \left(
    \varsigma-\frac{1}{|\cD|}
    \int_\cW \zeta(t)\,dx
    \right)^2
    -
    \left(
    \varsigma-\frac{|\cW|}{|\cD|}
    \right)^2
    \right],
\end{align*}
where $\widetilde{u}_t=u_t\circ\Phi_t$ and $\zeta(t)$ denotes the Jacobian determinant of the transformation $x\mapsto\Phi_t(x)$. \x{Since $\zeta(0)=1$ and $\zeta$ is continuously differentiable with respect to $t$, there exist constants $C_\zeta>0$ and $\tau_1>0$ such that
\[
    |\zeta(t)|\le C_\zeta,
    \qquad
    |\zeta(t)-1|\le C_\zeta t,
    \qquad
    \forall t\in[0,\tau_1].
\]
Using these estimates together with the trace inequality, whose constant is denoted by $C_{tr}>0$, yields
\begin{align}\label{eqn:sd_2}
  |t\Psi_t(\omega)|
  \leq& \;
  C_\zeta
  \|f_\cW(\omega)\|_{L^2(\cD)^2}
  \|\widetilde{u}_t(\omega)-u(\omega)\|_{H^1(\cD)^2}
  \nonumber\\
  &+
  C_\zeta t
  \|f_\cW(\omega)\|_{L^2(\cD)^2}
  \|u(\omega)\|_{H^1(\cD)^2}
  \nonumber\\
  &+
  C_{tr}
  \|g(\omega)\|_{L^2(\Gamma_n)^2}
  \|\widetilde{u}_t(\omega)-u(\omega)\|_{H^1(\cD)^2}
  \nonumber\\
  &+
  \frac{\Lambda}{2}
    \,C_\zeta t\,
    \frac{|\cW|}{|\cD|}
    \left(
    2\left|
    \varsigma-\frac{|\cW|}{|\cD|}
    \right|
    +
    C_\zeta t\,\frac{|\cW|}{|\cD|}
    \right).
\end{align}}

\noindent Next, we derive an upper bound for $\|\widetilde{u}_t(\omega) - u(\omega)\|_{H^1(\cD)^2}$. Recall that  $\widetilde{u}_t(\omega) = u_t(\omega) \circ \Phi_t $ satisfies the following variational formulation: 
\begin{equation}\label{eqn:sd_3}
  \int_\cD A_\cW(\omega) \varepsilon(t, \widetilde{u}_t) : \varepsilon(t, v) \, \zeta(t) \, dx = \int_\cD f_{\cW}(\omega) \cdot v \, \zeta(t) \,  dx  +\int_{\Gamma_n} g(\omega) \cdot v \, ds, 
\end{equation}
for all $v\in H^1_d(\cD)^2$. Correspondingly, the associated energy functional  $\mathcal{E}_\omega : [0, \tau) \times H^1_d(\cD)^2 \ra \R$ for \eqref{eqn:sd_3} is  defined by
\begin{equation*}
        \mathcal{E}_\omega(t, \varphi) = \frac{1}{2} \int_\cD A_\cW(\omega) \varepsilon(t,\varphi):\varepsilon(t,\varphi) \, \zeta(t) \,  dx - \int_\cD f_\cW(\omega) \cdot \varphi \, \zeta(t) \, dx - \int_{\Gamma_n} g(\omega) \cdot \varphi \, ds.
\end{equation*}
The first- and second-order Gâteaux derivatives of $\mathcal{E}_\omega$ with respect to $\varphi$ in the directions 
$\psi, \psi_0 \in H^1_d(\cD)^2$ are given, respectively, by
\begin{align*}
    (\delta_\varphi \mathcal{E}_\omega, \psi)
    =& \int_\cD 
    \big( A_\cW(\omega)\, \varepsilon(t, \varphi) : \varepsilon(t, \psi)
    - f_\cW(\omega) \cdot \psi \big)\, \zeta(t)\, dx
    - \int_{\Gamma_n} g(\omega) \cdot \psi\, ds, \\[0.5em]
    (\delta_\varphi^2 \mathcal{E}_\omega, \psi, \psi_0)
    =& \int_\cD 
    A_\cW(\omega)\, \varepsilon(t, \psi) : \varepsilon(t, \psi_0)\, 
    \zeta(t)\, dx.
\end{align*}
An application of Assumption~\ref{assump:material_coeff}, \x{the uniform positivity of $\zeta(t)$ for sufficiently small $t$,
and Korn's inequality implies that there exist constants $c_1>0$ and $\tau_2>0$, independent of $t$ and $\omega$, such that}
\begin{equation}\label{eqn:sd_4}
    (\delta_\varphi^2 \mathcal{E}_\omega,\psi,\psi)
    \ge c_1 \|\psi\|_{H^1(\cD)^2}^2
    \qquad \forall \psi \in H^1_d(\cD)^2,
\end{equation}
for all $t\in[0,\tau_2]$. Therefore, $\mathcal E_\omega(t,\cdot)$ is strongly convex on
$H^1_d(\cD)^2$ for all $t\in[0,\tau_2]$. Consequently, the minimization problem
\[
   \min_{\varphi \in H^1_d(\cD)^2} \mathcal E_\omega(t,\varphi)
\]
admits a unique minimizer for every $t\in[0,\tau_2]$. Moreover, this minimizer coincides with the weak solution $\widetilde{u}_t(\omega)$ of \eqref{eqn:sd_3} and satisfies the first-order optimality condition
\[
    (\delta_\varphi \mathcal{E}_\omega(t,\widetilde{u}_t),\psi)
    = 0
    \qquad
    \forall \psi \in H^1_d(\cD)^2.
\]
For $\widetilde{u}_t^r = r u + (1-r)\widetilde{u}_t$ with $r \in [0,1]$, it follows that
\begin{align*}
   &\int_0^1
   \left(
   \delta_\varphi^2 \mathcal{E}_\omega(t,\widetilde{u}_t^r),
   \widetilde{u}_t-u,
   \widetilde{u}_t-u
   \right)\,dr \\
   &\qquad=
   \left(
   \delta_\varphi \mathcal{E}_\omega(t,u),
   \widetilde{u}_t-u
   \right)
   -
   \left(
   \delta_\varphi \mathcal{E}_\omega(t,\widetilde{u}_t),
   \widetilde{u}_t-u
   \right)
   \nonumber\\
   &\qquad=
   \left(
   \delta_\varphi \mathcal{E}_\omega(t,u),
   \widetilde{u}_t-u
   \right)
   -
   \left(
   \delta_\varphi \mathcal{E}_\omega(0,u),
   \widetilde{u}_t-u
   \right)
   \nonumber\\
   &\qquad=
   t
   \left(
   \delta_{t,\varphi}\mathcal{E}_\omega(t r_t,u),
   \widetilde{u}_t-u
   \right),
\end{align*}
where the second equality follows from the first-order optimality conditions
\[
(\delta_\varphi \mathcal E_\omega(t,\widetilde u_t),\psi)
=0, \qquad 
(\delta_\varphi \mathcal E_\omega(0,u),\psi)
=
0
\qquad
\forall \psi\in H^1_d(\cD)^2.
\]
Moreover, by the mean value theorem, there exists $r_t\in(0,1)$ such that the last equality holds. \x{Here, we note that $\delta_{t,\varphi}\mathcal E_\omega$
denotes the derivative of $\delta_\varphi\mathcal E_\omega$ with respect to the parameter $t$. By Assumption~\ref{assump:material_coeff}, the continuous differentiability of $\zeta(t)$, and the smooth dependence of the transformation $\Phi_t$ on $t$, there exist $\tau_3>0$ and $c_2>0$, independent of $t$ and $\omega$, such that}
\begin{equation}\label{eqn:sd_6}
    \left|
    \left(
    \delta_{t,\varphi}\mathcal{E}_\omega(t,\varphi),
    \psi
    \right)
    \right|
    \le
    c_2
    \left(
    \|\varphi\|_{H^1(\cD)^2}
    +
    \|f_\cW(\omega)\|_{L^2(\cD)^2}
    \right)
    \|\psi\|_{H^1(\cD)^2}
\end{equation}
for all $t\in[0,\tau_3]$. Using \eqref{eqn:sd_6} together with \eqref{eqn:sd_4}, we obtain
\begin{equation}\label{eqn:sd_7}
    \|\widetilde{u}_t(\omega)-u(\omega)\|_{H^1(\cD)^2}
    \le
    c\,t\,
    \left(
    \|u(\omega)\|_{H^1(\cD)^2}
    +
    \|f_\cW(\omega)\|_{L^2(\cD)^2}
    \right),
\end{equation}
\x{where $c:=c_2/c_1$, for all $t\in[0,\min(\tau_2,\tau_3)]$. Inserting \eqref{eqn:sd_7}  into \eqref{eqn:sd_2}, and using $t\le \tau$, we obtain
\begin{align*}
    |t\,\Psi_t(\omega)|
    &\le
    t
    \Big[
    C_\zeta c
    \|f_\cW(\omega)\|_{L^2(\cD)^2}
    \Big(
    \|u(\omega)\|_{H^1(\cD)^2}
    +
    \|f_\cW(\omega)\|_{L^2(\cD)^2}
    \Big)
    \\
    &\qquad
    +
    C_\zeta
    \|f_\cW(\omega)\|_{L^2(\cD)^2}
    \|u(\omega)\|_{H^1(\cD)^2}
    \\
    &\qquad
    +
    C_{tr}c
    \|g(\omega)\|_{L^2(\Gamma_n)^2}
    \Big(
    \|u(\omega)\|_{H^1(\cD)^2}
    +
    \|f_\cW(\omega)\|_{L^2(\cD)^2}
    \Big)
    \\
    &\qquad
    +
    \frac{\Lambda}{2}
    C_\zeta
    \frac{|\cW|}{|\cD|}
    \left(
    2\left|
    \varsigma-\frac{|\cW|}{|\cD|}
    \right|
    +
    C_\zeta \tau \frac{|\cW|}{|\cD|}
    \right)
    \Big].
\end{align*}
From the stability estimate, there exists a constant $C_R>0$,
independent of $t$ and $\omega$, such that
\[
    |t\,\Psi_t(\omega)|
    \le
    t\,R(\omega),
\]
where
\[
\begin{aligned}
R(\omega)
:={}&
C_R
\left(
\|f_\cW(\omega)\|_{L^2(\cD)^2}
+
\|g(\omega)\|_{L^2(\Gamma_n)^2}
\right)^2
\\
&+
\frac{\Lambda}{2}
C_\zeta
\frac{|\cW|}{|\cD|}
\left(
2\left|
\varsigma-\frac{|\cW|}{|\cD|}
\right|
+
C_\zeta \tau \frac{|\cW|}{|\cD|}
\right).
\end{aligned}
\]
Thus,
\[
    |\Psi_t(\omega)|
    \le
    R(\omega)
    \qquad
    \forall t\in(0,\tau],
\]
with
\[
    \tau:=\min\{\tau_1,\tau_2,\tau_3\}.
\]
Moreover, $R\in L^1(\Omega)$ by Assumption~\ref{assump:force_funcs}. Hence, the assumptions of \cite[Lemma~2.14]{CGeierbach_ELRomero_KWelker_2021} are satisfied, and the proof is complete.}
\qed

\section{Numerical approximation techniques} \label{sec:discretization} 

In this section, we present numerical approximation techniques that reduce the infinite-dimensional optimization problem to a finite-dimensional one. To alleviate the computational burden associated with the number of samples and the spatial degrees of freedom, we introduce a posteriori error estimators for both settings. Additionally, the method for determining a descent direction from the shape derivative is explored and incorporated into the level-set method. Further, the optimization step length in the (mini-batch) stochastic gradient method is  adjusted adaptively using an estimate of the Lipschitz constant.

\subsection{Representation of random fields}

To obtain a numerical approximation of the problem described in \eqref{eqn:elastic_PDE_ersatz}, we adopt the finite-dimensional noise assumption as introduced by Wiener in \cite{NWiener_1938a}.

\begin{assumption} \label{asm:finite_noise}
There exists an $M$-dimensional random vector \x{$\xi: \, \Omega \rightarrow \Xi=\prod \limits_{i=1}^M \Xi_i \subset \R^M$} with  joint  probability density function $\rho: \, \Xi \rightarrow \mathbb{R}_+$ such that 
$z(\cdot,\omega) \equiv z_M (\cdot, \xi(\omega) )$, where   $z \in \{f_\cW, g, \mu, \lambda \}$.
\end{assumption} 

Assumption~\ref{asm:finite_noise} can be realized through various finite-dimensional representations of random fields. One common approach is the truncated Karhunen--Lo\`{e}ve (KL) expansion \cite{KKarhunen_1947,MLoeve_1946}, given by
\begin{equation}\label{eqn:KLexpansion}
  \varkappa_M(x, \omega) = \overline{\varkappa}(x) + \kappa_{\varkappa} \sum \limits_{k=1}^{M} \sqrt{\lambda_k}b_k(x)\xi_k(\omega),
\end{equation}
where $\{\xi_k(\omega)\}_{k=1}^{M}$ are mutually uncorrelated random variables with zero mean and unit variance, i.e., $\mathbb{E}[\xi_k]=0$ and $\mathbb{E}[\xi_i \xi_j]=\delta_{ij}$. The functions $\overline{\varkappa}$ and $\kappa_{\varkappa}$ denote the mean and standard deviation of the random field $\varkappa$, respectively. The eigenpairs $\{(\lambda_k,b_k)\}_{k=1}^{M}$ are obtained by solving the Fredholm integral equation associated with the covariance kernel $\mathcal{C}_{\varkappa}$, yielding a sequence of nonincreasing positive eigenvalues $\lambda_1 \ge \lambda_2 \ge \cdots > 0$ together with the corresponding  orthonormal eigenfunctions. The truncated KL expansions are further chosen so that the positivity and boundedness conditions of Assumption \ref{assump:material_coeff} are preserved.

\subsection{Descent direction with level-set method}\label{subsec:descent_direction}

In the gradient-based optimization process, for a given realization $\xi \in \Xi$, we seek a descent direction $\theta$ satisfying
\[
\diff J^P(\cW,\xi)(\theta) < 0.
\]
To this end, \x{by introducing $
X
:=
\Bigl\{
\theta\in H^1(\cD)^2:
\theta\cdot n=0 \text{ on }\partial\cD\setminus L,
\;
\theta=0 \text{ on }L\cup\Gamma_d\cup\Gamma_n
\Bigr\}$, we solve the deformation equation (see, e.g., \cite{ALaurain_2018,VSchulz_MSiebenborn_2016}): find $\theta\in X$ such that
\begin{equation}\label{eqn:weak_form_theta}
    b(\theta,\varphi)
    =
    -\diff J^P(\cW,\xi)(\varphi)
    \qquad
    \forall \varphi\in X,
\end{equation}
where the positive-definite bilinear form
$b:X\times X\to\mathbb R$ is defined by
\[
    b(\theta,\varphi)
    =
    \int_\cD
    \big(
    \tau_1 \nabla\theta:\nabla\varphi
    +
    \tau_2 \theta\cdot\varphi
    \big)\,dx,
\]
with constants $\tau_1,\tau_2>0$. For a fixed realization $\xi\in\Xi$, the weak solution of
\eqref{eqn:weak_form_theta} is denoted by $\theta(\xi)\in X$.
Choosing $\varphi=\theta(\xi)$ in \eqref{eqn:weak_form_theta} gives
\[
    \diff J^P(\cW,\xi)(\theta(\xi))
    =
    -b(\theta(\xi),\theta(\xi))
    <0,
\]
provided that $\theta(\xi)\neq 0$. Hence, the deformation field $\theta(\xi)$ defines a strict descent direction for the penalized
objective functional $J^P(\cW,\xi)$.}

\begin{remark}
\x{The deformation bilinear form \eqref{eqn:weak_form_theta} is employed as a numerical mechanism for constructing a descent direction from the shape derivative. This approach follows the Hilbertian extension--regularization framework commonly used in shape optimization; see, e.g., \cite[Section~5.2.2]{GAllaire_CDapogny_FJouve_2021}. While the shape derivative is defined on the admissible perturbation space $\Theta^k(\cD)$, the auxiliary deformation problem is solved in the computational space $X$, which incorporates the boundary restrictions required for the numerical shape update. Thus, $X$ serves as the computational space in which a regularized deformation field is obtained from the shape derivative for use in the numerical optimization procedure.}

\x{We also note that this construction is not unique. As discussed in \cite[Section~5.2.2]{GAllaire_CDapogny_FJouve_2021}, shape updates may also be obtained from alternative procedures, including boundary-velocity formulations, projected gradient methods, and other regularization strategies. The particular choice affects the computed deformation field and the resulting shape evolution, but not the underlying shape derivative itself.}
\end{remark}

Once the deformation field $\theta$ has been computed, the shape $\mathcal W$ can be updated by deforming it along this descent direction. However, directly tracking the evolution of the domain boundary under such perturbations can be challenging. To overcome this difficulty, we employ the level-set method introduced in \cite{SOsher_JASethian_1988}, which provides an implicit representation of the geometry. This formulation greatly simplifies the treatment of topology changes, allowing for the natural creation and merging of holes during the shape evolution; see \cite{NPVDijk_KMaute_MLangelaar_FVKeulen_2013}. In the level-set framework, the evolving domain $\mathcal W_t\subset\mathcal D$ over a fictitious time interval $t\in(0,T)$ is characterized by a Lipschitz continuous function
$\psi:\mathcal D\times(0,T)\to\mathbb R$, referred to as the \emph{level-set function}, defined by
\[
\left\{
\begin{array}{lll}
    \psi(x,t)<0 & \Leftrightarrow & x\in\mathcal W_t,\\
    \psi(x,t)=0 & \Leftrightarrow & x\in\partial\mathcal W_t\cap\mathcal D,\\
    \psi(x,t)>0 & \Leftrightarrow & x\in\mathcal D\setminus\overline{\mathcal W_t}.
\end{array}
\right.
\]
Thus, the boundary $\partial\mathcal W_t$ is represented by the zero level set of $\psi(\cdot,t)$.

Let $x(t) = \Phi_t(x_0)$ denote the trajectory of a point on $\partial \mathcal{W}_t$ originating from the initial domain
$\mathcal{W}$, where the motion is governed by the velocity field $\dot{x}(t) = \theta(x(t))$ with initial condition $x(0) = x_0$. By differentiating
the identity $\psi(x(t),t) = 0$ with respect to $t$, we obtain the following Hamilton--Jacobi type equation (HJE), which describes the evolution of the
shape during the optimization process
\begin{equation}\label{eqn:hje}
    \frac{\partial \psi}{\partial t}
    + \theta \cdot \nabla \psi = 0
    \qquad \text{in } \mathcal{D} \times (0,T).
\end{equation}
Consequently, solving \eqref{eqn:hje} with a prescribed initial level-set function $\psi(\cdot,0)$ corresponds to evolving the
boundary of $\mathcal W_t$ according to \x{the descent velocity field $\theta$} associated with a fixed realization $\xi \in \Xi$.

To keep the computational cost manageable, we solve the Hamilton--Jacobi equation \eqref{eqn:hje} using the Lax-Friedrichs finite difference  scheme
(see \cite{ALaurain_2018,SOsher_CWShu_1991}) on the rectangular domain $\mathcal{D} = [0,l_x] \times [0,l_y]$. For  the nodal values $\psi_{i,j}$  and the deformation field $\theta = (\theta_x, \theta_y)$, the Lax-Friedrichs numerical scheme is given by
\begin{equation*}
    \widehat{H}^{LF}(p^-, p^+, q^-, q^+) = \frac{\theta_x}{2} (p^+ + p^-) + \frac{\theta_y}{2} (q^+ + q^-) - \frac{|\theta_x|}{2} (p^+ - p^-) - \frac{|\theta_y|}{2} (q^+ - q^-),
\end{equation*}
where the finite difference approximations are
\begin{equation*}
    \begin{split}
        p^- &= \partial_x^- \psi_{i,j} \approx \frac{\psi_{i,j} - \psi_{i-1,j}}{\Delta x}, \qquad p^+ = \partial_x^+ \psi_{i,j} \approx \frac{\psi_{i+1,j} - \psi_{i,j}}{\Delta x}, \\
        q^- &= \partial_y^- \psi_{i,j} \approx \frac{\psi_{i,j} - \psi_{i,j-1}}{\Delta y}, \qquad q^+ = \partial_y^+ \psi_{i,j} \approx \frac{\psi_{i,j+1} - \psi_{i,j}}{\Delta y}.
    \end{split}
\end{equation*}
Applying a forward Euler discretization in the fictitious time variable with time step $\Delta t$, the fully discrete scheme corresponding to \eqref{eqn:hje} is given by
\begin{equation}\label{eqn:HJE_discrete}
    \psi_{i,j}^{k+1} = \psi_{i,j}^k - \Delta t \widehat{H}^{LF}(p^-, p^+, q^-, q^+).
\end{equation}
Here, the time step $\Delta t$ is chosen according to the Courant–Friedrichs–Lewy (CFL) condition and is given by 
\[
    \Delta t
    = \alpha_k \frac{\min(\Delta x,\Delta y)}{\theta_{\max}},
\]
where $\alpha_k\in(0,1]$ denotes the step length used in the gradient-based optimization algorithm, $\Delta x = l_x / n_x$ and $\Delta y = l_y / n_y$ are the 
grid spacings in the $x$- and $y$-directions, and $\theta_{\max} = \|\theta\|_\infty$. Additionally, the number of fictitious time steps performed at each
optimization iteration $k$ is adjusted adaptively according to the observed reduction in the objective value $J^P(\mathcal W_k,\xi)$.


\begin{remark}
To prevent the level-set function $\psi$ from becoming excessively flat or overly steep during its evolution, one may employ the reinitialization procedure introduced in \cite{DLChoop_1993}. This procedure restores $\psi$ to a signed-distance function while preserving its zero level-set. For further implementation details and practical considerations, we refer the reader to \cite{GAllaire_FJouve_AMToader_2004,ALaurain_2018}.
\end{remark}

\subsection{Approximation in the parametric space} \label{subsec:approx_parametric}

To approximate statistical moments, such as  the expected value or higher-order moments of a general quantity of interest like $J^P(\cW,\omega)$ in \eqref{eqn:optimization_penalized_fixed}, we employ a  sample-based Monte Carlo approximation over the parametric space $\Xi$. This leads to
\begin{align} \label{eqn:objective_mc_estimator}
&\mathbb{E}[J^P(\cW,\omega)] \approx  \Em[J^P(\cW,\xi)] =  \frac{1}{N} \sum \limits_{i=1}^{N} J^P(\cW,\xi_i) \nonumber \\
& =   \frac{1}{N} \sum_{i=1}^N \left[\int_\cD f_{\cW,M} \cdot u(x, \xi_i) \, dx + \int_{\Gamma_n} g_M \cdot u(x,\xi_i) \, ds\right] +  \frac{\Lambda}{2} \left(\varsigma  - \frac{|\cW|}{|\cD|}\right)^2,
\end{align}
where $\{\xi_i\}_{i=1}^N \subset \Xi$ denotes  a set of $N$ independent samples. Similarly, the stochastic shape derivative expressed in \eqref{eqn:derivative_penalizedobjective} is approximated by
\begin{align}\label{eqn:derivative_mc_estimator}
    &\E[\diff J^P(\cW, \omega)(\varphi)] \nonumber \\
    &\quad \approx \Em[\diff J^P(\cW, \xi)(\varphi)] 
     = \frac{1}{N} \sum_{i=1}^N \left[ \int_\cD \big( S_{\xi_i} : \nabla \varphi  \big) \, dx \right] - \widetilde{\Lambda} \int_\cW \Div \varphi \, dx.
\end{align}

\subsection{Approximation in the spatial domain}\label{subsec:approx_spatial}

Let $\{\mathcal T_h\}_h$ be a quasi-uniform family of triangulations of $\mathcal D$ such that $\overline{\mathcal D} = \bigcup_{K \in \mathcal T_h} \overline{K}$. We define the mesh size by $h = \max_{K \in \mathcal T_h} h_K$, where $h_K$ denotes the diameter of the element $K$. Moreover, let $\mathcal E_h$ denote the partition of the boundary $\partial \mathcal D$ induced by $\mathcal T_h$, meaning that each $E \in \mathcal E_h$ with $E \subset \partial \mathcal D$ is an edge of some element $K \in \mathcal T_h$. Denoting the set of  all linear polynomials on $K$ by $\mathbb{P}_1(K)$,  we  introduce the finite element spaces
\[
V_h
:=
\Bigl\{
v \in C(\overline{\cD})^2 :
v|_K \in [\mathbb P_1(K)]^2
\; \forall K \in \cT_h,
\;
v = 0 \text{ on } \Gamma_d
\Bigr\}
\subset H_d^1(\cD)^2
\]
and
\x{
\[
X_h
:=
\left\{
\theta \in C(\overline{\cD})^2 :
\begin{aligned}
&\theta|_K \in [\mathbb P_1(K)]^2
&& \forall K \in \cT_h,
\\
&\theta \cdot n = 0
&& \text{on } \partial\cD \setminus L,
\\
&\theta = 0
&& \text{on } L \cup \Gamma_d \cup \Gamma_n
\end{aligned}
\right\}
\subset X.
\]}

\noindent After applying the KL expansion to the random fields and approximating the expectation by the Monte Carlo method, the discrete weak formulation of the system
\eqref{eqn:elastic_PDE_ersatz} for a fixed realization $\xi_i\in\Xi$  is as follows: find $u_h(\cdot,\xi_i)\in V_h$  such that 
\begin{equation}\label{eqn:weak_form_discrete}
    a(u_h, v) = l(v)
    \qquad \forall v \in V_h,
\end{equation}
where
\begin{align*}
    a(u,v)
    = &\;
    \int_{\mathcal D}
    A_{\mathcal W,M}(\cdot,\xi_i)\,
    \nabla^s u : \nabla^s v
    \,dx, \\
    l(v)
    = &\;
    \int_{\mathcal D}
    f_{\mathcal W,M}(\cdot,\xi_i)\cdot v
    \,dx
    +
    \int_{\Gamma_n}
    g_M(\cdot,\xi_i)\cdot v
    \,ds.
\end{align*}
We refer the reader to \cite[Chapter~11]{SCBrenner_LRScott_2008} for the existence and uniqueness of the corresponding Galerkin solution.

To generate a sequence of meshes $\{\cT_h^k\}_{k\ge 0}$ adaptively, we employ the dual-weighted residual (DWR) framework \cite{WBangerth_RRannacher_2003a,RBecker_RRannacher_2001a}. In this goal-oriented mesh adaptation strategy, the objective is to control the error in a quantity of interest (QoI) $\cQ:W\to\R$ for some Hilbert space $W$:
\[
    |\cQ(u)-\cQ(u_h)|
    \leq 
    \eta
    \le
    \texttt{tol},
\]
where $\texttt{tol}>0$ is a prescribed tolerance. For completeness, we recall the corresponding error representation formula; see \cite{WBangerth_RRannacher_2003a,RBecker_RRannacher_2001a} for its proof.

\begin{proposition}\label{prop:error_representation}
Let $W_h\subset W$ be a finite-dimensional subspace of a Hilbert space
$W$. Let $u\in W$ and $u_h\in W_h$ denote the exact and discrete
solutions, respectively. Then, for a sufficiently smooth goal functional
$\cQ$, the following a posteriori error representation holds:
\[
    \cQ(u)-\cQ(u_h)
    =
    \frac12 r(z-I_h z)
    +
    \frac12 r^*(u-I_h u)
    +
    E,
\]
where $r$ and $r^*$ denote the residuals of the primal and dual
problems, respectively, $I_hu,I_hz\in W_h$ are suitable interpolants or
projections, and $E$ denotes a higher-order remainder term.

Moreover, if the governing problem is linear and $\cQ$ is linear, the
higher-order remainder vanishes and the error representation simplifies to
\begin{equation}\label{prop:error_representation_linear}
    \cQ(u)-\cQ(u_h)
    =
    r(z-I_h z).
\end{equation}
\end{proposition}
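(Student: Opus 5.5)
The plan is to work in the standard DWR setting. We have a primal problem: find $u\in W$ with $A(u)(v)=0$ for all $v\in W$, where $A(u)(v)$ is semilinear in $v$. We also have its Galerkin approximation $u_h\in W_h$ with $A(u_h)(v_h)=0$ for all $v_h\in W_h$. We recast the goal-oriented error through the Lagrangian
\[
\mathcal L(u,z):=\cQ(u)-A(u)(z).
\]
Its stationary points are $(u,z)$, where $z$ solves the dual problem
\[
A'(u)(\varphi,z)=\cQ'(u)(\varphi)\quad \forall\varphi\in W,
\]
and $(u_h,z_h)$ is the corresponding discrete pair. We define the primal residual $r(\cdot):=-A(u_h)(\cdot)$ and the dual residual $r^*(\cdot):=\cQ'(u_h)(\cdot)-A'(u_h)(\cdot,z_h)$. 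Because $A(u)(\cdot)=A(u_h)(\cdot)=0$ on the respective spaces, we have $\cQ(u)-\cQ(u_h)=\mathcal L(x)-\mathcal L(x_h)$ with $x=(u,z)$ and $x_h=(u_h,z_h)$.

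The key step is the trapezoidal-rule identity for a smooth functional $\mathcal L$ on the product space:
\[
\mathcal L(x)-\mathcal L(x_h)=\int_0^1\mathcal L'(x_h+s e)(e)\,ds=\tfrac12\mathcal L'(x)(e)+\tfrac12\mathcal L'(x_h)(e)+E,
\]
where $e=x-x_h$. The remainder is
\[
E=\tfrac12\int_0^1\mathcal L'''(x_h+se)(e,e,e)\,s(s-1)\,ds,
\]
which is cubic in the error and hence of higher order. Stationarity gives $\mathcal L'(x)=0$. Evaluating $\mathcal L'(x_h)(e)$ splits it into a $z$-component, $-A(u_h)(z-z_h)=r(z-z_h)$, and a $u$-component, $\cQ'(u_h)(u-u_h)-A'(u_h)(u-u_h,z_h)=r^*(u-u_h)$. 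Galerkin orthogonality says $r(v_h)=0$ and $r^*(v_h)=0$ for $v_h\in W_h$. This lets us replace $z_h$ and $u_h$ by arbitrary $I_hz$ and $I_hu\in W_h$, which yields the stated representation.

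For the linear case, take $A(u)(v)=a(u,v)-l(v)$ with $a$ bilinear and $\cQ$ linear. Then $\mathcal L$ is quadratic, so $\mathcal L'''\equiv0$ and $E=0$. The identity can also be checked directly. The error equals
\[
\cQ(u-u_h)=a(u-u_h,z)=a(u-u_h,z-I_hz)=l(z-I_hz)-a(u_h,z-I_hz)=r(z-I_hz),
\]
using the dual problem and primal Galerkin orthogonality. By the same argument,
\[
r^*(u-I_hu)=a(u-u_h,z-z_h)=r(z-I_hz),
\]
so the two halves coincide and sum to \eqref{prop:error_representation_linear}.

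The main obstacle is rigor in the nonlinear case. One must specify the smoothness needed: $\mathcal L$ should be three times Gâteaux differentiable along the segment $[x_h,x]$ with bounded third derivative, so that $E=O(\|e\|^3)$. One must also justify the trapezoidal-rule error formula in Banach spaces. I would handle this by applying the scalar trapezoidal error formula to $s\mapsto\mathcal L(x_h+se)$, which is legitimate once the differentiability is assumed as part of "sufficiently smooth".
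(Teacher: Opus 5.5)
Your proposal is correct and follows essentially the route the paper relies on. The paper gives no proof of its own and cites Becker--Rannacher, whose proof uses the same ingredients: the Lagrangian $\mathcal L(u,z)=\cQ(u)-A(u)(z)$, the trapezoidal-rule identity along the segment from $(u_h,z_h)$ to $(u,z)$ with cubic remainder, and Galerkin orthogonality to insert arbitrary $I_hu, I_hz\in W_h$. Your direct check in the linear case is also correct: it shows $r^*(u-I_hu)=a(u-u_h,z-z_h)=r(z-I_hz)$, so the two halves merge into the single-residual formula.
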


To obtain optimal designs that are comparable to those computed on highly refined meshes, we account not only for the discretization error associated with the constraint PDE, namely, the linear elasticity system \eqref{eqn:elastic_PDE_ersatz}, but also for the discretization error arising from the deformation equation \eqref{eqn:weak_form_theta}, which determines the descent direction. Consequently, multiple goal functionals must be introduced to quantify these distinct sources of discretization error.

Our first goal functional is the compliance functional. For a fixed realization $\xi\in\Xi$, it is defined by
\[
    \mathcal Q^c(u)
    :=
    \int_{\mathcal D}
    f_{\mathcal W,M}\cdot u\,dx
    +
    \int_{\Gamma_n}
    g_M\cdot u\,ds.
\]
For $v \in H_d^1(\cD)^2$,  the primal and dual residuals are given by
\begin{equation*}
    r(v) = l(v) - a(u_h, v) \quad \hbox{and} \quad r^*(v) = \langle \delta_u \cQ^{c}(u_h), v \rangle - a^*(z_h, v),
\end{equation*}
respectively, for the primal solution $u_h$ and the dual solution $z_h$, and $z_h$  solves 
\begin{equation} \label{eqn:weak_form_discrete_adjoint}
      a^*(z_h, v) = \langle \delta_u \cQ^{c}(u_h), v \rangle \qquad \forall  v \in V_h.
\end{equation}
Here, $\langle \delta_u\mathcal Q^c(u_h),v \rangle$ denotes the Gâteaux derivative of $\mathcal Q^c$ at $u_h$ in the direction $v$. In view of the discrete weak form \eqref{eqn:weak_form_discrete} and \eqref{eqn:weak_form_discrete_adjoint}, the primal and dual residuals satisfy the Galerkin orthogonality relations
\begin{equation*}
   r(v_h) = r^*(v_h) = 0 \qquad \forall  v_h \in V_h.
\end{equation*}
By applying integration by parts elementwise to the residuals $r$ and $r^*$ and using the Cauchy--Schwarz inequality, we arrive at the estimate
\begin{equation} 
        |\cQ^{c}(u) - \cQ^{c}(u_h)| \leq \sum_{K \in \mathcal{T}_h} \Big(\underbrace{\rho_K^u \omega_K^z}_{\eta_K^u} + \underbrace{\rho_K^z \omega_K^u}_{\eta_K^z} \Big):= \eta^{c},
\end{equation}
where the element residuals $\rho_K^z, \rho_K^u$ are 
\begin{eqnarray*}
    \rho_K^u &=& \| f_{\cW,M} + \Div(A_{\cW,M} \nabla^s u_h) \|_K + h^{-1/2}_K \| \ljp (A_{\cW,M} \nabla^s u_h) n \rjp \|_{\partial K}, \\
    \rho_K^z &=& \| \delta_u \cQ^{c}(u_h) + \Div(A_{\cW,M} \nabla^s z_h) \|_K + h^{-1/2}_K \| \ljp (A_{\cW,M} \nabla^s z_h) n \rjp \|_{\partial K}, 
\end{eqnarray*}
and the corresponding local weights $\omega_K^u$ and $\omega_K^z$ are given by
\begin{equation*}
    \omega_K^u = \| u - I_h u \|_K + h^{1/2}_K \| u - I_h u \|_{\partial K}, \quad
    \omega_K^z = \| z - I_h z \|_K + h^{1/2}_K \| z - I_h z \|_{\partial K},
\end{equation*}
where $\ljp \cdot \rjp$ denotes the jump residual. For an interior edge
$E=K_1\cap K_2$ with $K_1,K_2\in\cT_h$, it is defined by
\[
\ljp (A_{\cW,M}\nabla^s u_h)n \rjp |_E
=
\frac12
\Big(
(A_{\cW,M}\nabla^s u_h)n|_{K_1}
-
(A_{\cW,M}\nabla^s u_h)n|_{K_2}
\Big),
\]
where $n$ denotes the unit normal vector on $\partial K_1$. On the
Neumann boundary $\Gamma_n$, the jump residual is defined by
\[
\ljp (A_{\cW,M}\nabla^s u_h)n \rjp
=
g_M-(A_{\cW,M}\nabla^s u_h)n,
\]
whereas on $\partial \cD \backslash (\overline \Gamma_d \cup \overline \Gamma_n)$ it is given by
\[
\ljp (A_{\cW,M}\nabla^s u_h)n \rjp
=
-(A_{\cW,M}\nabla^s u_h)n.
\]

\begin{remark}
We note that the underlying quantity of interest $\cQ^c$ is a linear functional. Therefore, the error representation simplifies, and in our numerical simulations we employ the bound
\begin{equation} \label{eqn:estimate_linear}
      |\cQ^{c}(u) - \cQ^{c}(u_h)|  \leq \sum_{K \in \cT_h} \eta_K^{c}  = \sum_{K \in \cT_h} \rho_K^u \omega_K^z,
\end{equation}
see Proposition~\ref{prop:error_representation}. 
\end{remark} 

Motivated by the role of the shape derivative $\diff J^P(\mathcal{W})$ in updating the  geometry, we introduce a second
goal functional to control the discretization error in the deformation field. Specifically, we define
\[
    \cQ^{d}(\theta)
    = -\,\diff J^P(\mathcal{W},\xi)(\theta),
\]
which measures the accuracy of the deformation field used to evolve the shape. Applying \eqref{prop:error_representation_linear}, together with the symmetry of the bilinear form $b(\theta,\varphi)$ in \eqref{eqn:weak_form_theta} and the linearity of $\mathcal Q^d$, yields
\begin{align}\label{eqn:QoI_shape_1}
   \cQ^{d}(\theta) - \cQ^{d}(\theta_h) &= -\diff J^P(\cW,\xi)(\vartheta-I_h \vartheta) - b(\theta_h, \vartheta-I_h \vartheta) \nonumber \\
     &=: r^{d}(\vartheta-I_h \vartheta), \qquad  \forall  I_h \vartheta \in X_h,
\end{align}
where $\vartheta\in X$ denotes the solution of the dual problem associated with \eqref{eqn:weak_form_theta}, and
$I_h\vartheta\in X_h$ denotes an interpolant of $\vartheta$. The residual is given by
\begin{align*}
 r^{d}(\vartheta-I_h \vartheta) &= - \int_\cD \big( S_\xi : \nabla (\vartheta-I_h \vartheta)  \big) \, dx  + \widetilde{\Lambda} \,\int_\cW \Div(\vartheta-I_h \vartheta) \, dx  \\
  &\quad - \int_\cD (\tau_1 \nabla \theta_h : \nabla(\vartheta-I_h \vartheta) + \tau_2 \theta_h \cdot (\vartheta-I_h \vartheta) ) \, dx.
\end{align*}
Here, $S_\xi$ depends on the discrete state $u_h$, that is,
$S_\xi:=S_\xi(u_h)$, since the discrete primal solution is used in the deformation variational problem \eqref{eqn:weak_form_theta}. Applying elementwise integration by \x{parts} over each $K\in\mathcal T_h$, we obtain
\begin{align*}
r^{d}(\vartheta - I_h \vartheta)
&= \sum_{K \in \mathcal{T}_h} \int_{\partial K}
    \ljp -\tau_1 \nabla \theta_h \, n \rjp
    \cdot (\vartheta - I_h \vartheta) \, ds \notag \\
&\quad + \sum_{K \in \mathcal{T}_h} \int_K
    \Big(
        \big( \widetilde{\Lambda} \, \chi_{\mathcal{W}} \, \mathrm{Id}
        - S_\xi \big)
        : \nabla (\vartheta - I_h \vartheta)  \notag \\
&\qquad\qquad
        + (\tau_1 \Div(\nabla \theta_h)
        - \tau_2 \theta_h)
        \cdot (\vartheta - I_h \vartheta)
    \Big) \, dx.
\end{align*}
Then, applying the divergence theorem to the tensor $\widetilde{\Lambda}\,\chi_{\cW}\,\mathrm{Id}-S_\xi$ elementwise on each $K\in\mathcal T_h$ yields
\begin{align}\label{eqn:QoI_shape_3}
        r^{d}(\vartheta-I_h \vartheta)
        =&\sum_{K \in \mathcal{T}_h}
        \int_{\partial K}
        \ljp -(\tau_1 \nabla\theta_h + S_\xi
        - \widetilde{\Lambda}\chi_\cW\mathrm{Id}) n \rjp
        \cdot (\vartheta-I_h \vartheta) \, ds
        \nonumber\\
        &+
        \sum_{K \in \mathcal{T}_h}
        \int_K
        \Big(
        \Div(\tau_1 \nabla\theta_h + S_\xi)
        - \tau_2 \theta_h
        \Big)
        \cdot (\vartheta-I_h \vartheta) \, dx .
\end{align}
Inserting \eqref{eqn:QoI_shape_3} into \eqref{eqn:QoI_shape_1}, we obtain  a dual-weighted residual error indicator
\begin{equation} \label{eqn:estimate_shape}
    |\cQ^{d}(\theta) - \cQ^{d}(\theta_h)| \leq \sum_{K \in \mathcal{T}_h} \rho_K^{\theta} \omega_K^{\vartheta}:= \eta^{d},
\end{equation}
where the element residual  $\rho_K^{\theta}$ and the local weight $\omega_K^{\vartheta}$ are given, respectively, by
\begin{align*}
    \rho_K^{\theta} =& \|  \Div(\tau_1 \nabla\theta_h + S_\xi) - \tau_2 \theta_h   \|_{K} \nonumber \\
     &+  h^{-1/2}_K \| \ljp -(\tau_1 \nabla\theta_h + S_\xi - \widetilde{\Lambda} \, \chi_\cW \,  \mathrm{Id}) n \rjp \|_{\partial K}, \\
    \omega_K^{\vartheta} =&   \| \vartheta-I_h \vartheta \|_K + h^{1/2}_K \| \vartheta-I_h \vartheta  \|_{\partial K}.
\end{align*}

\x{To obtain fully computable a posteriori error estimators from \eqref{eqn:estimate_linear} and \eqref{eqn:estimate_shape}, the exact dual solutions appearing in the weights must be approximated. To this end, we employ the standard DWR approach based on enriched dual approximations. Let $\widetilde V_h\supset V_h$ and $\widetilde X_h\supset X_h$ denote quadratic finite element spaces defined on the same triangulation $\mathcal T_h$. The enriched dual solutions $\widetilde z_h\in\widetilde V_h$ and $\widetilde\vartheta_h\in\widetilde X_h$ are obtained by solving the corresponding dual problems posed in these enriched spaces. The dual weights are then approximated by
\[
\omega_K^z
\approx
\|\widetilde z_h-I_h\widetilde z_h\|_K
+
h_K^{1/2}\|\widetilde z_h-I_h\widetilde z_h\|_{\partial K},
\]
and analogously for $\omega_K^\vartheta$. Here, $I_h$ denotes the interpolation operator from the enriched spaces onto the corresponding linear finite element spaces. The differences $\widetilde z_h-I_h\widetilde z_h$ and $\widetilde\vartheta_h-I_h\widetilde\vartheta_h$ provide computable approximations of the dual weights appearing in the DWR error estimators.}

\x{Finally, following \cite{BEndtmayer_ULanger_TRichter_ASchafelner_TWick_2024}, we balance the contributions of the different goal functionals by scaling them and define the combined error quantity
\begin{equation}\label{eqn:Q_bound}
    \mathcal Q
    =
    \frac{|\mathcal Q^{c}(u)-\mathcal Q^{c}(u_h)|}
         {|\mathcal Q^{c}(u_h)|}
    +
    \frac{|\mathcal Q^{d}(\theta)-\mathcal Q^{d}(\theta_h)|}
         {|\mathcal Q^{d}(\theta_h)|}.
\end{equation}
In practice, the unknown errors are replaced by the computable estimators $\eta^c$ and $\eta^d$ defined above.}

\subsection{Adaptive sampling and computation of a practical step length}\label{subsec:adaptive_sampling_step_length}

It is well known that for a standard Monte Carlo estimator as in \eqref{eqn:objective_mc_estimator}, the error is of  order $\mathcal{O}(N^{-1/2})$, which means that a sufficiently large sample size $N$ is required to accurately estimate the expectation. However, when constructing objective functionals and gradient-like terms, repeatedly solving the linear system obtained from the finite element discretization can be cumbersome with  large sample sizes. 
Recent advances in machine learning have made stochastic gradient descent (SGD) methods  a promising alternative approach to alleviating the computational burden of robust shape optimization problems;  see, e.g., \cite{SDe_KMaute_ADoostan_2020,LJofre_ADoostan_2022}. The SGD approach produces an unbiased stochastic approximation of the gradient at each optimization step, analogous to the standard Monte Carlo method but using significantly fewer samples. However, in order to control the variance of
the underlying quantity of interest, the sample size must still be chosen adequately. Following the strategies proposed in \cite{RBollapragada_RHByrd_JNocedal_2018,RHByrd_GMChin_JNocedal_2012}, we mitigate this variance by adaptively increasing the number of samples whenever necessary. In this strategy, we begin with a relatively small sample set of size $|S| \ll N$. If the current iteration is likely to yield a meaningful descent direction for the stochastic shape derivative, the sample size is left unchanged. Otherwise, the sample set is enlarged based on an a posteriori error indicator that reflects the variance of the sampled stochastic shape derivative.

\x{After spatial discretization, both the deformation problem and the associated shape derivatives admit finite-dimensional representations. Accordingly, each admissible shape is represented by a vector of design variables, and the resulting optimization problem can be viewed as the minimization of a discrete objective function $\mathcal J_h:\mathcal U_h^{\mathrm{ad}}\subset\mathbb R^n\to\mathbb R$. Let $g_i\in\mathbb R^n$ denote the coefficient vector corresponding to the finite-element discretization of the shape derivative $\diff J^P(\cW,\xi_i)$ associated with the $i$-th sample. We then define the averaged stochastic shape derivative over a sample set $S_k$ by
\[
g_{S_k}
=
\frac{1}{|S_k|}
\sum_{i\in S_k}
g_i.
\]}

Following the methodology of Bollapragada et al.~\cite{RBollapragada_RHByrd_JNocedal_2018}, the a posteriori rule for predicting the size of the next sample set $S_{k+1}$, referred to as the (practical) \emph{augmented inner product test}, is given by
\begin{equation}\label{eq:sample_update}
   |S_{k+1}| = \left\lceil \rho \, |S_k| \right\rceil,
   \qquad \text{with} \qquad
   \rho = \max(\rho^{IT}, \rho^{OT}),
\end{equation}
where
\begin{eqnarray*}
  \rho^{IT}
  &:=&
  \frac{1}{(|S_k|-1)|S_k|\,\nu_{IT}^2\|g_{S_k}\|^4}
  \sum_{i\in S_k}
  \bigl(g_i^Tg_{S_k}-\|g_{S_k}\|^2\bigr)^2, \\
  \rho^{OT}
  &:=&
  \frac{1}{(|S_k|-1)|S_k|\,\nu_{OT}^2\|g_{S_k}\|^2}
  \sum_{i\in S_k}
  \left(
      \|g_i\|^2
      -
      \frac{(g_i^Tg_{S_k})^2}{\|g_{S_k}\|^2}
  \right).
\end{eqnarray*}
Here, $\nu_{IT}$ and $\nu_{OT}$ are positive constants. The update rule \eqref{eq:sample_update} is applied whenever at least one of the conditions
\begin{equation}\label{eqn:sample_condition}
    \rho^{IT} \leq 1
    \qquad\text{and}\qquad
    \rho^{OT} \leq 1
\end{equation}
fails to hold. 


To update the shape within the gradient-based optimization framework, we use the deformation field $\theta_k$ obtained from the deformation problem described in Section~\ref{subsec:descent_direction}. At the continuous level, the shape update can be formally written as
\begin{equation}\label{eqn:update}
\mathcal W_{k+1}
=
(\mathrm{Id}+\alpha_k\theta_k)(\mathcal W_k),
\end{equation}
where $\alpha_k>0$ denotes the optimization step length.

\x{In the numerical implementation, the update is performed on the corresponding discrete design variables. The step length $\alpha_k$ is chosen in accordance with the convergence theory of Bollapragada et al.~\cite[Theorem~3.4]{RBollapragada_RHByrd_JNocedal_2018}. Assuming that the discrete objective function $\mathcal J_h$ satisfies the hypotheses of that theorem, $\alpha_k$ is selected such that
\begin{equation}\label{eqn:step_size}
\alpha_k
\le
\frac{1}{(1+\nu_{IT}^2+\nu_{OT}^2)L},
\end{equation}
where $L$ denotes a Lipschitz constant of the gradient $g_{S_k}$. Under these assumptions, the convergence results of \cite{RBollapragada_RHByrd_JNocedal_2018} ensure that the resulting stochastic gradient descent iteration generates a descent sequence for the discrete optimization problem.}

\x{In practice, the exact value of $L$ is generally unavailable. Therefore, we estimate a local Lipschitz constant adaptively during the optimization process. Since the shapes are evolved through the Hamilton--Jacobi equation, we approximate the distance between two consecutive shapes using the displacement induced by the deformation field. Following \cite{DLuft_KWelker_2019}, we define the geometric distance proxy
\[
    \widehat d(\cW_k,\cW_{k-1})
    =
    \int_{\cW_k}
    \min_{y\in\cW_{k-1}}
    \|x-y\|\,dx .
\]}

\noindent Recall that points are evolved according to
\[
    \dot x(t)=\theta(x(t)),
    \qquad
    x(0)=x_0.
\]
For a fictitious time interval $t>0$, corresponding to the final time of the discrete Hamilton--Jacobi evolution \eqref{eqn:HJE_discrete}, we have
\begin{equation}\label{eqn:x_approx}
    x(t)
    =
    x(0)
    +
    \int_0^t \theta(x(\tau))\,d\tau
    \approx
    x(0)+t\,\theta(x(0)).
\end{equation}
Thus,
\begin{align}
\widehat d(\cW_k,\cW_{k-1})
&=
\int_{\cW_k}
\min_{x_{k-1}\in\cW_{k-1}}
\|x-x_{k-1}\|\,dx
\nonumber\\
&\lesssim
t\,\|\theta_{k-1}\|_{\infty}\,|\cW_k|,
\label{eqn:distance_approx}
\end{align}
where $\|\theta_{k-1}\|_\infty$ denotes the maximum norm of the deformation field computed from the sampled stochastic gradient $g_{S_{k-1}}$.

We then estimate a local shape-dependent Lipschitz constant by the
secant-type approximation
\begin{equation}\label{eqn:lipschitz_estimated}
L_k
\approx
\frac{\|g_{S_k}-g_{S_{k-1}}\|}
     {\widehat d(\cW_k,\cW_{k-1})}
\approx
\frac{
\|g_{S_k}-g_{S_{k-1}}\|
}{
t\,\|\theta_{k-1}\|_{\infty}\,|\cW_k|
}.
\end{equation}
Finally, the adaptive step length is obtained by replacing $L$ with $L_k$ in \eqref{eqn:step_size}.


\section{Adaptive procedure} \label{sec:algorithm} 

In this section, we present an adaptive procedure for solving the robust shape optimization problem \eqref{eqn:objective}; see Algorithm~\ref{alg:full_procedure}. We also provide detailed descriptions of the algorithm's main subroutines. Recall that $k \geq 0$ denotes the optimization iteration, and let $\cW_k$ be the current shape with deformable boundary $\partial\cW_k$. The associated finite element spaces are denoted by
\[
V_h^k := V_h(\mathcal T_h^k),
\qquad
X_h^k := X_h(\mathcal T_h^k),
\]
where $\mathcal T_h^k$ is the computational mesh on $\cD$ used to represent the current shape $\cW_k$.

\begin{algorithm}[htp!]
    \begin{algorithmic}[1]
        \Require Working domain $\cD$, initial level-set function $\psi^0$, parameters $\alpha_0,\nu_{OT},\nu_{IT},\Lambda,\varsigma,\epsilon,\tau_1,\tau_2>0$,
        initial sample size $|S_0|$, KL eigenpairs $\{(\lambda_i,b_i)\}_{i=1}^{M}$,
        initial triangulation $\mathcal T_h^0$ with mesh size $h^0$,
        reference mesh size $h^*$, and tolerance $\texttt{tol}_\eta$.
        \State $\cW_0 \leftarrow \{x\in\cD:\psi^0(x)<0\}$ 
        \State $k \leftarrow 0$ 
        \State $\alpha_k \leftarrow \alpha_0$
        \State $\{\xi^i\}_{i=1}^{M} \leftarrow \texttt{compute\_kle}(\lambda_i,b_i)$
        \While{stopping criteria are not satisfied}
            \For{$i=1, \ldots, |S_k|$}
                \State $u_h^i,z_h^i,\theta_h^i,\vartheta_h^i \leftarrow \texttt{solve\_weak\_forms} (\mathcal T_h^k,\cW_k,\xi^i)$
                \State $\eta_K^{c,i},\eta_K^{d,i} \leftarrow \texttt{estimate\_dwr} (\mathcal T_h^k,\cW_k,u_h^i,z_h^i,\theta_h^i,\vartheta_h^i,\xi^i)$
                \State $(J_k^P)^i \leftarrow \texttt{compute\_cost} (\mathcal T_h^k,\cW_k,u_h^i,\xi^i)$
                \State $(\diff J_k^P)^i \leftarrow \texttt{compute\_shape\_derivative} (\mathcal T_h^k,\cW_k,u_h^i,z_h^i,\theta_h^i,\xi^i)$
            \EndFor
            \State Compute the sample averages $\eta_k^{S_k}$, $(J_k^P)^{S_k}$, $(\diff J_k^P)^{S_k}$, and $\theta_h^{S_k}$.
            \If{$\eta_k^{S_k} > \mathrm{tol}_{\eta}$ \textbf{and} $h_{\min}(\mathcal T_h^k)>h^*$}
                \State $\mathcal T_h^k \leftarrow \texttt{mark\_and\_refine} (\mathcal T_h^k,\eta_k^{S_k})$
                \State \textbf{continue} (Restart the current optimization iteration on the refined mesh).
            \Else
                \State $|S_{k+1}| \leftarrow \texttt{estimate\_sample\_size} \bigl( (\diff J_k^P)^{S_k}, \{(\diff J_k^P)^i\}_{i\in S_k}, \nu_{IT}, \nu_{OT} \bigr)$
                \If{$k > 0$} 
                    \State $\alpha_k \leftarrow \texttt{estimate\_step\_length} \bigl( (\diff J_k^P)^{S_k}, (\diff J_{k-1}^P)^{S_{k-1}}, \cW_k \bigr)$ 
                \EndIf
                \State $\mathcal T_h^{k+1} \leftarrow \texttt{reset\_mesh} (\mathcal T_h^k)$
                \State $\psi^{k+1} \leftarrow \texttt{solve\_hje} (\psi^k,\theta_h^{S_k},\alpha_k)$
                \State $\cW_{k+1} \leftarrow \{x\in\cD:\psi^{k+1}(x)<0\}$
                \State $k \leftarrow k+1$
            \EndIf
\EndWhile
\caption{An adaptive framework for robust shape optimization.}
\label{alg:full_procedure}
\end{algorithmic}
\end{algorithm}

At the beginning of the procedure, the KL expansion \eqref{eqn:KLexpansion} is generated in the subroutine \texttt{compute\_kle} using the prescribed eigenpairs $\{(\lambda_i,b_i)\}_{i=1}^{M}$. In the subroutine \texttt{solve\_weak\_forms}, we then solve the primal problems \eqref{eqn:weak_form_continuous} and \eqref{eqn:weak_form_theta}, along with their corresponding dual problems, by employing the finite element spaces $V_h^k$ and $X_h^k$ associated with the current shape $\cW_k$. Next, the dual-weighted residual estimators given in \eqref{eqn:estimate_linear} and \eqref{eqn:estimate_shape} are computed in the subroutine \texttt{estimate\_dwr}. During the computation of the corresponding error indicators, it is additionally necessary to solve the enriched dual problems
\[
a^*(\widetilde z_h, v)
=
\langle \delta_u \mathcal Q^c(u_h), v \rangle
\qquad
\forall v\in\widetilde V_h^k
\]
and
\[
b^*(\widetilde\vartheta_h,\varphi)
=
\langle \delta_\theta \mathcal Q^d(\theta_h),\varphi\rangle
\qquad
\forall \varphi\in\widetilde X_h^k,
\]
where $\widetilde V_h^k\supset V_h^k$ and $\widetilde X_h^k\supset X_h^k$ denote quadratic finite element spaces defined on the same triangulation. The solutions $\widetilde z_h$ and $\widetilde\vartheta_h$ are then employed to approximate the dual weights appearing in the DWR error estimators. Subsequently, the penalized compliance cost \eqref{eqn:objective_mc_estimator} and the corresponding shape derivative \eqref{eqn:derivative_mc_estimator} are evaluated for each sample realization $\xi^i$ in the subroutines \texttt{compute\_cost} and \texttt{compute\_shape\_derivative}, respectively. The subroutine \texttt{mark\_and\_refine} selects elements of $\mathcal{T}_h^k$ to be refined using Dörfler's bulk-marking strategy \cite{WDorfler_1996}, and then refines the marked elements by bisection while preserving the conformity of $\mathcal{T}_h^k$. This refinement is performed whenever the scaled error indicator
$\eta_k^{S_k}$ exceeds the prescribed tolerance
$\mathrm{tol}_{\eta}$ while the minimum mesh size of the current triangulation remains larger than the prescribed reference mesh size $h^*$.

Once the discretization error has been assessed, we apply the augmented inner-product criterion described in Section~\ref{subsec:adaptive_sampling_step_length} to determine whether the sample size should be increased in the subroutine \texttt{estimate\_sample\_size}. Subsequently, the optimization step length $\alpha_k$ is computed in the subroutine \texttt{estimate\_step\_length} using the bound \eqref{eqn:step_size}, where the Lipschitz constant is approximated by the local estimate $L_k$ defined in \eqref{eqn:lipschitz_estimated}.

Before updating the current shape, the mesh is reset to the initial triangulation. This prevents the accumulation of locally refined meshes during the optimization process and simplifies the subsequent level-set evolution. The Hamilton--Jacobi equation \eqref{eqn:hje} is then solved by the finite difference scheme described in Section~\ref{subsec:descent_direction} within the subroutine \texttt{solve\_hje}. The updated level-set function $\psi^{k+1}$ determines the new shape
\[
\cW_{k+1}
=
\{x\in\cD:\psi^{k+1}(x)<0\}.
\]
The optimization loop is terminated whenever one of the following
conditions is satisfied:
\begin{itemize}
    \item Setting $J_{S_k}^P := (J_k^P)^{S_k}$, the relative change of the objective value over the previous five
    iterations satisfies
    \[
    \max_{1\le i\le \min(5,k)}
    \frac{\left|J_{S_k}^P-J_{S_{k-i}}^P\right|}
         {|J_{S_k}^P|}
    \le 10^{-2},
    \]
    and the volume constraint is satisfied up to the prescribed tolerance,
    \[
    \left|
    \frac{|\cW_k|}{|\cD|}
    -
    \varsigma
    \right|
    \le 5\times 10^{-3},
    \]
    \item or a prescribed maximum number of optimization iterations has
    been reached.
\end{itemize}


\begin{remark}
Mesh coarsening is as important as mesh refinement in level-set--based shape optimization because it enables the computational mesh to adapt to the evolving geometry by eliminating unnecessarily fine elements while maintaining the accuracy of the numerical approximation. Such a mechanism is particularly beneficial when regions that previously required local refinement no longer contain relevant geometric features.

Since mesh coarsening is not currently supported in the FEniCS/DOLFIN framework, we employ the alternative strategy used in Algorithm~\ref{alg:full_procedure}, namely resetting the mesh to the initial triangulation after each optimization step. Although this approach is sufficient for the benchmark problems considered in this work, the incorporation of a genuine adaptive coarsening procedure would be desirable for large-scale and more complex applications, where long optimization runs may otherwise lead to unnecessarily fine meshes and increased computational costs.
\end{remark}


\section{Numerical experiments} \label{sec:numeric}

\x{In this section, we investigate the performance of the proposed robust shape optimization framework through numerical experiments inspired by aerospace landing systems \cite{SAKhan_ZMehmood_ZAfshan_2021,JWong_LRyan_IYKim_2018} and legged-robot locomotion \cite{AGaathon_ADegani_2022}. In aerospace landing structures, touchdown events may generate impact loads with uncertain magnitude and direction due to variations in landing conditions and terrain properties. Such uncertainties can significantly affect the load-transfer mechanism within the structure and often lead to optimal designs that differ substantially from those obtained under deterministic loading assumptions. Similarly, in legged-robot locomotion, leg components experience repeated and intermittent contact events with the ground, resulting in uncertain normal and tangential forces. Variations in terrain geometry, surface slopes, and contact conditions may substantially alter the structural response. In both settings, the objective is to design lightweight leg- or strut-like structures that remain mechanically efficient under stochastic loading conditions.}

\x{The purpose of these numerical investigations is not to model specific engineering systems in full detail, but rather to evaluate the effectiveness of the proposed adaptive optimization framework in representative uncertainty-driven scenarios. To this end, we focus on compliance minimization under random loads and compare three settings: (i) a randomness study on a fixed mesh, (ii) an adaptive mesh refinement study using full Monte Carlo sampling, and (iii) the proposed fully adaptive framework combining adaptive sampling and adaptive mesh refinement.}

In the finite element approximation of the solutions, continuous piecewise-linear Lagrange elements are employed on the crossed triangulation $\mathcal{T}_h$, resulting in $2(n_x+1)(n_y+1) + 2n_x n_y$ degrees of freedom (DoF). Random input fields are represented by a truncated Gaussian Karhunen--Loève (KL) expansion with a separable exponential covariance kernel
\begin{equation*}
    \textnormal{Cov}[\varkappa](x, \widetilde{x}) 
    = \kappa_{\varkappa}^2 
      \prod_{j=1}^2 e^{-|x_j - \widetilde{x}_j| / \ell_j},
\end{equation*}
where $\ell_j$ denotes the correlation length in the $j$-th spatial direction and $\kappa_\varkappa$ is the standard deviation
on the computational domain $\cD = [0, l_x] \times [0, l_y]$.  Unless otherwise stated, the parameters used in the simulations are provided in Table~\ref{tab:para}. All computations are carried out on an Intel(R) Core(TM) i5-6500 @ 3.20~GHz processor with 8~GB RAM, using the FEniCS/DOLFIN library and based on the implementation developed by Laurain \cite{ALaurain_2018} \x{together with custom routines for adaptive mesh refinement, stochastic sampling, and level-set evolution. The source code used to produce the numerical results reported in this work will be made publicly available upon acceptance of the manuscript, thereby ensuring the reproducibility of the presented findings.}

\begin{table}[htp!]
\caption{Descriptions of the parameters used in the numerical simulations.}
\label{tab:para}
\centering
\begin{tabular}{lll}
\hline
Parameter & Description & Value \\
\hline
$\epsilon$
& weak material parameter in \eqref{eqn:hook_ersatz}
& $10^{-3}$ \\

$\varsigma$
& prescribed volume fraction in \eqref{eqn:admissible}
& $0.3$ \\

$\Lambda$
& volume-constraint penalty parameter in \eqref{eqn:optimization_penalized_fixed}
& $500$ \\

$\nu_{IT}$
& inner-product test parameter in \eqref{eq:sample_update}
& $0.6$ \\

$\nu_{OT}$
& orthogonality test parameter in \eqref{eq:sample_update}
& $5.8 \approx \tan(80^\circ)$ \\

$\tau_1,\tau_2$
& deformation bilinear-form parameters in \eqref{eqn:weak_form_theta}
& $10^{3},\,1$ \\

$|S_0|$
& initial Monte Carlo sample size
& $2$ \\

$\alpha_0$
& initial optimization step length
& $0.01$ \\

$\ell_1, \ell_2$
& correlation length in the KL covariance kernel
& $1$ \\

$\texttt{tol}_{\eta}$
& tolerance for the DWR error estimator
& $0.1$ \\

$h^*$
& minimum admissible mesh size
& $1/360$ \\
\hline
\end{tabular}
\end{table}

\begin{figure}[htp!]
    \centering
    \includegraphics[width=0.32\linewidth]{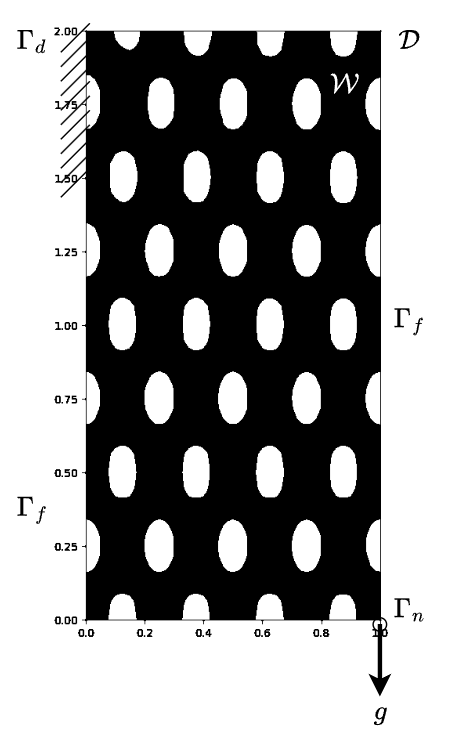}
    \caption{Initial level-set representation of the design domain on the computational domain $\mathcal{D}$, together with the prescribed boundary conditions and the external load $g$ applied at $(0,1)$.}
    \label{fig:leg_design_setup}
\end{figure}

As a benchmark example, we consider the compliance minimization of carrier legs in aerospace and robotic structures. In this setting, the isotropic material parameters, including the dummy material approximation, are  given by
\[
    \mu = \frac{E}{2(1+\nu)}
    \qquad \text{and} \qquad
    \lambda = \frac{E\nu}{(1+\nu)(1-2\nu)},
\]
where the Young's modulus is set to $E = 1$ and the Poisson ratio to $\nu = 0.3$. We assume that there is no body force, that is, $f=0$. The initial mesh is generated using $n_x = 60$ and $n_y = 120$, resulting in $2 \times 14{,}581$ degrees of freedom (DoF). \x{Unless otherwise stated, 30 samples are used in the full sampling approach.} With correlation lengths chosen as $\ell_1 = \ell_2 = 1$, we compute the first $M$ eigenpairs $\{(\lambda_i, b_i)\}_{i=1}^M$ of the KL expansion to capture 90\% of the total energy, with an upper bound of $M_{\max} = 100$. Further, the random loading (i.e., contact force) is prescribed as 
\[
    g = (10\cos\phi,\, 10\sin\phi),
\]
where the random loading angle is realized through
\begin{equation*}
    \phi = \overline{\phi} + \kappa_\phi \sum_{i=1}^M \sqrt{\lambda_i}\, b_i(x)\, \xi_i,
\end{equation*}
with mean angle $\overline{\phi} = 90^\circ$ and standard deviation $\kappa_\phi \in \{5^\circ, 10^\circ, 30^\circ\}$. Here, $\xi = (\xi_1,\dots,\xi_M)$ denotes an i.i.d.\ vector of Gaussian random variables satisfying $\xi_i \sim \mathcal{N}(0,1)$. The load $g(x,\omega)$ is applied at the point $(1,0)$, i.e., $\Gamma_n = \{1\} \times \{0\}$. The Dirichlet boundary is specified as $\Gamma_d = \{0\} \times [1.5,2]$, while the remaining part of the boundary is taken to be the free boundary $\Gamma_f$. Further, the initial domain $\mathcal{W}_0$ is constructed using the initial  level-set function
$ \psi^0(x,y) =  -\cos(8\pi x)\cos(4\pi y) - 0.5$ as illustrated in Figure~\ref{fig:leg_design_setup}.

To compare the computational effort required by the different numerical experiments, we introduce the computational cost indicator
\begin{equation}\label{index}
    \texttt{CI}
    \propto
    \sum_{i=1}^{K}
    \sum_{j=1}^{Q}
    N_i\,\varrho(d_{ij}),
\end{equation}
where $K$ denotes the total number of optimization iterations, $Q$ is the number of mesh refinement levels, $d_{ij}$ is the number of degrees of freedom at the $i$-th optimization iteration and the $j$-th refinement level, and $N_i$ is the number of Monte Carlo samples used at the $i$-th optimization iteration. The function $\varrho$ represents the computational complexity of the linear solver as a function of the number of degrees of freedom. As the main linear solver, we employ the MUMPS implementation available in FEniCS/DOLFIN, whose complexity for two-dimensional problems is often approximated by
$\mathcal O(d^{3/2})$; see, e.g., \cite{PRAmestoy_ISDuff_JVLexcellent_JKoster_2001}. 

\subsection{Randomness study}

We first investigate the effect of uncertainty in the loading angle for various values of the standard deviation $\kappa_\phi$. These values represent scenarios in which the landing approach is maintained despite substantial deviations from the nominal touchdown angle or when leg impacts occur on terrains with varying levels of roughness. Accordingly, Figure~\ref{fig:random_study_angle} presents the optimized designs obtained for $\kappa_\phi \in \{0^\circ, 5^\circ,10^\circ,30^\circ\}$. \x{As $\kappa_\phi$ increases, the optimized designs exhibit thicker primary load paths, indicating enhanced robustness. The minor changes observed between the $\kappa_\phi=10^\circ$ and $30^\circ$ cases suggest convergence toward a robust topology under high uncertainty.}

\begin{figure}[h!]
         \centering
         \begin{subfigure}[b]{0.24\textwidth}
             \centering
             \includegraphics[width=\textwidth]{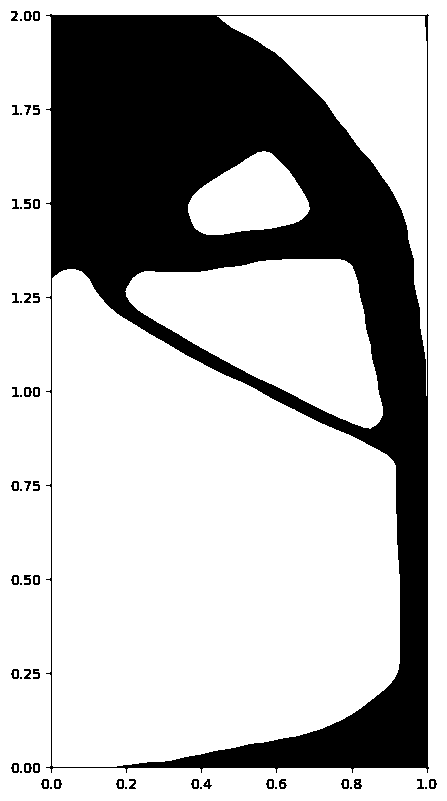}
             \caption{$\kappa_\phi = 0^\circ$.}
         \end{subfigure}
         \hfill         
         \begin{subfigure}[b]{0.23\textwidth}
             \centering
             \includegraphics[width=\textwidth]{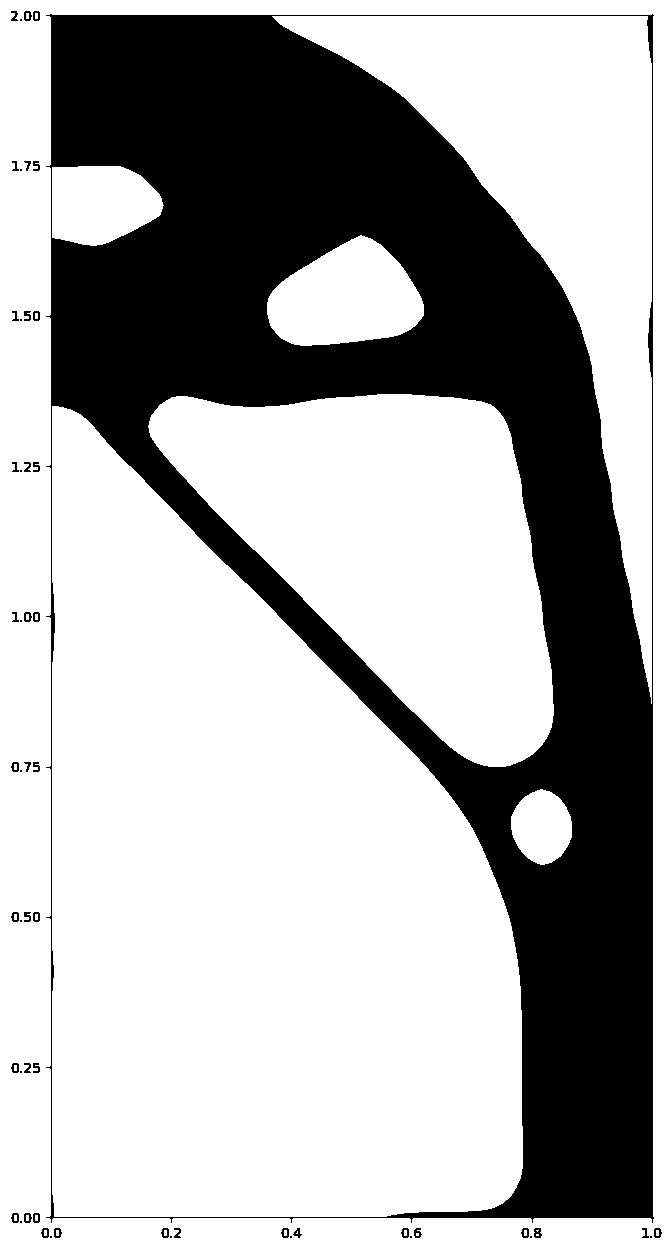}
             \caption{$\kappa_\phi = 5^\circ$.}
         \end{subfigure}
         \hfill
         \begin{subfigure}[b]{0.23\textwidth}
             \centering
             \includegraphics[width=\textwidth]{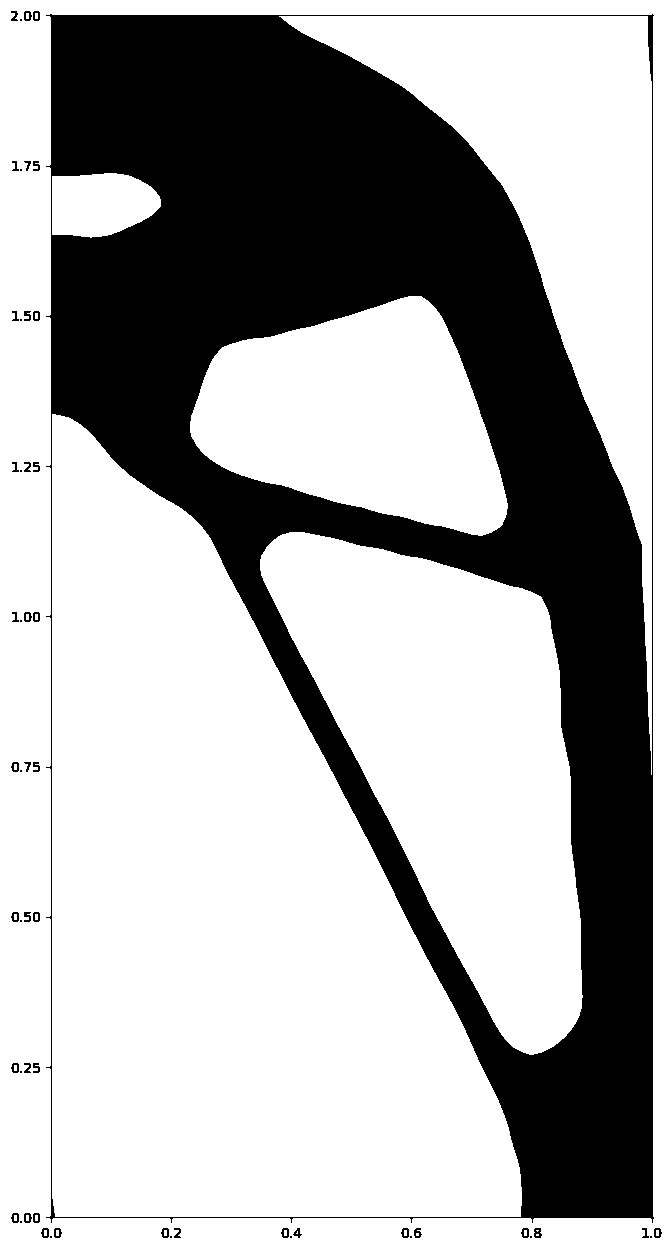}
             \caption{$\kappa_\phi = 10^\circ$.}
         \end{subfigure}
         \hfill
         \begin{subfigure}[b]{0.24\textwidth}
             \centering
             \includegraphics[width=\textwidth]{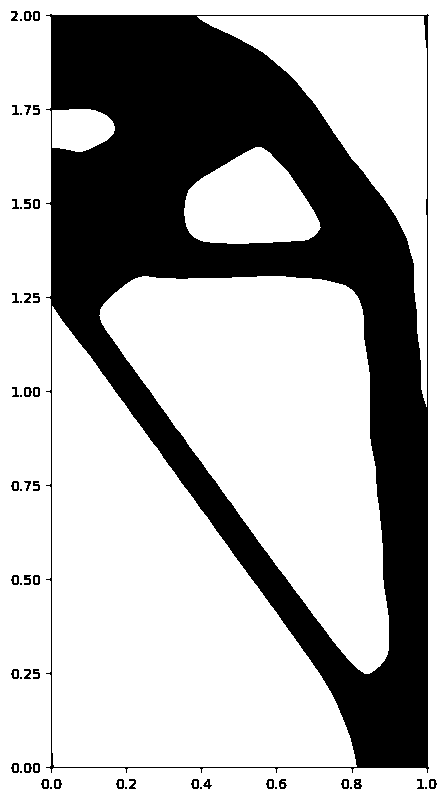}
             \caption{$\kappa_\phi = 30^\circ$.}
         \end{subfigure}
            \caption{Final optimized designs obtained for various loading-angle deviations $\kappa_\phi$ using full Monte Carlo sampling on a fixed mesh.}
            \label{fig:random_study_angle}
\end{figure}

Next, we investigate the effect of randomness on a fixed mesh and demonstrate the effectiveness of the adaptive sampling strategy introduced in Section~\ref{subsec:approx_parametric}. Figure~\ref{fig:random_study_final_design} shows the final designs obtained for a loading-angle standard deviation of $\kappa_\phi = 30^\circ$ using full and adaptive sampling. It can be observed that the optimized designs produced by the two approaches have nearly identical topology. However, the computational effort is significantly reduced when adaptive sampling is employed; see Table~\ref{table:random_study_comp_index}. According to the computational cost indicator $\texttt{CI}$ reported in Table~\ref{table:random_study_comp_index}, gradually increasing the sample size, rather than using the full sample set at every optimization iteration, substantially reduces the computational cost while still providing accurate approximations of the mean compliance obtained from full Monte Carlo sampling; see Figure~\ref{fig:randomness_study_mean_sample}.

\begin{figure}[h!]
         \centering        
         \begin{subfigure}[b]{0.25\textwidth}
             \centering
             \includegraphics[width=\textwidth]{fs_umc_design_rev.png}
             \caption{Full sampling.}
         \end{subfigure}
         \qquad
         \begin{subfigure}[b]{0.25\textwidth}
             \centering
             \includegraphics[width=\textwidth]{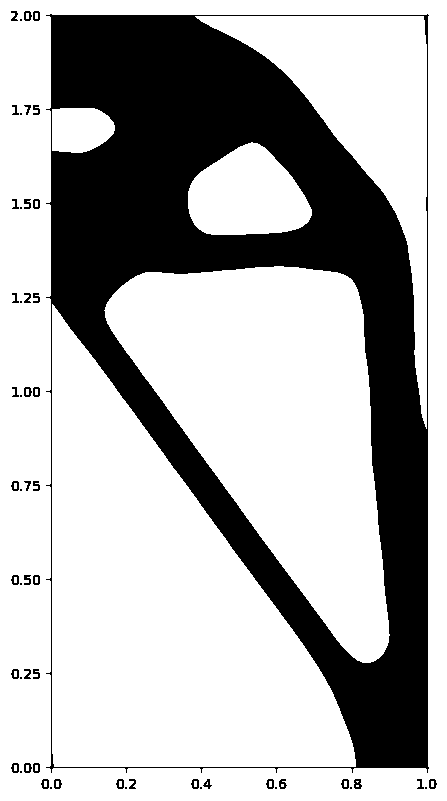}
             \caption{Adaptive sampling.}
         \end{subfigure}
         \caption{Final optimized designs on a fixed mesh using full and adaptive sampling approaches for a loading-angle deviation of $\kappa_\phi = 30^\circ$.}
         \label{fig:random_study_final_design}
\end{figure}

\begin{table}[htp!]
    \caption{Comparison of the computational effort of the full and adaptive sampling approaches in terms of the number of samples, total runtime, and the computational cost indicator defined in \eqref{index}.}\label{table:random_study_comp_index}
	\centering
\begin{tabular}{l|l|l|l|l}
	                    & DoF        & $N$                & $T_{\text{total}}$[hours] & \texttt{CI} \\ \hline
    full sampling    & $29,\!162$ & $30$               & $4.94$              & $44.7 \cdot 10^9$ \\
    adaptive sampling   & $29,\!162$ & $2 \rightarrow 6$  & $1.18$              & $8.54 \cdot 10^9$
    \end{tabular}
\end{table}

\begin{figure}[h!]
        \centering
        \begin{subfigure}[b]{0.48\textwidth}
            \centering
            \includegraphics[width=\textwidth]{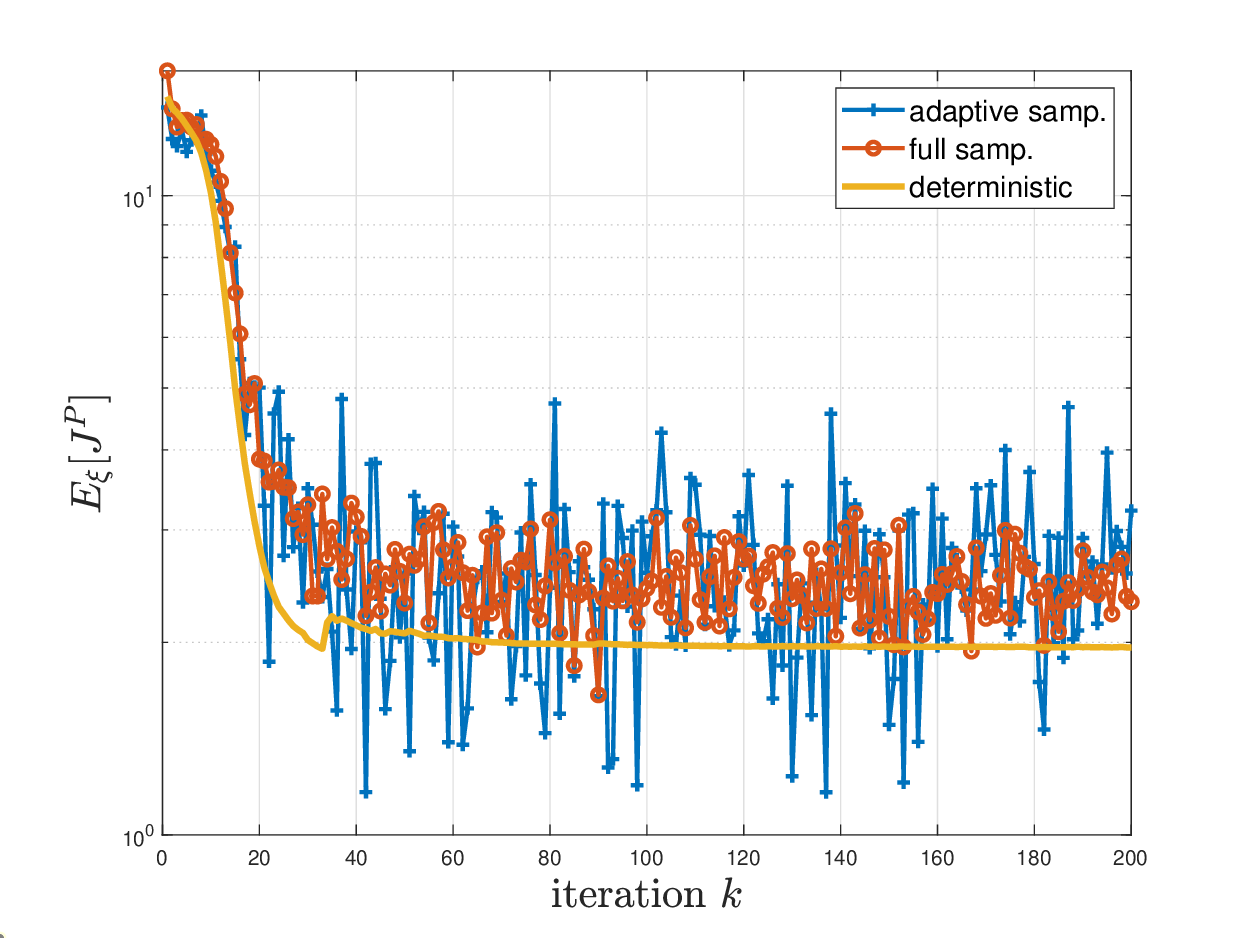}
            \caption{Mean compliance.}
        \end{subfigure}
        \begin{subfigure}[b]{0.48\textwidth}
            \centering
            \includegraphics[width=\textwidth]{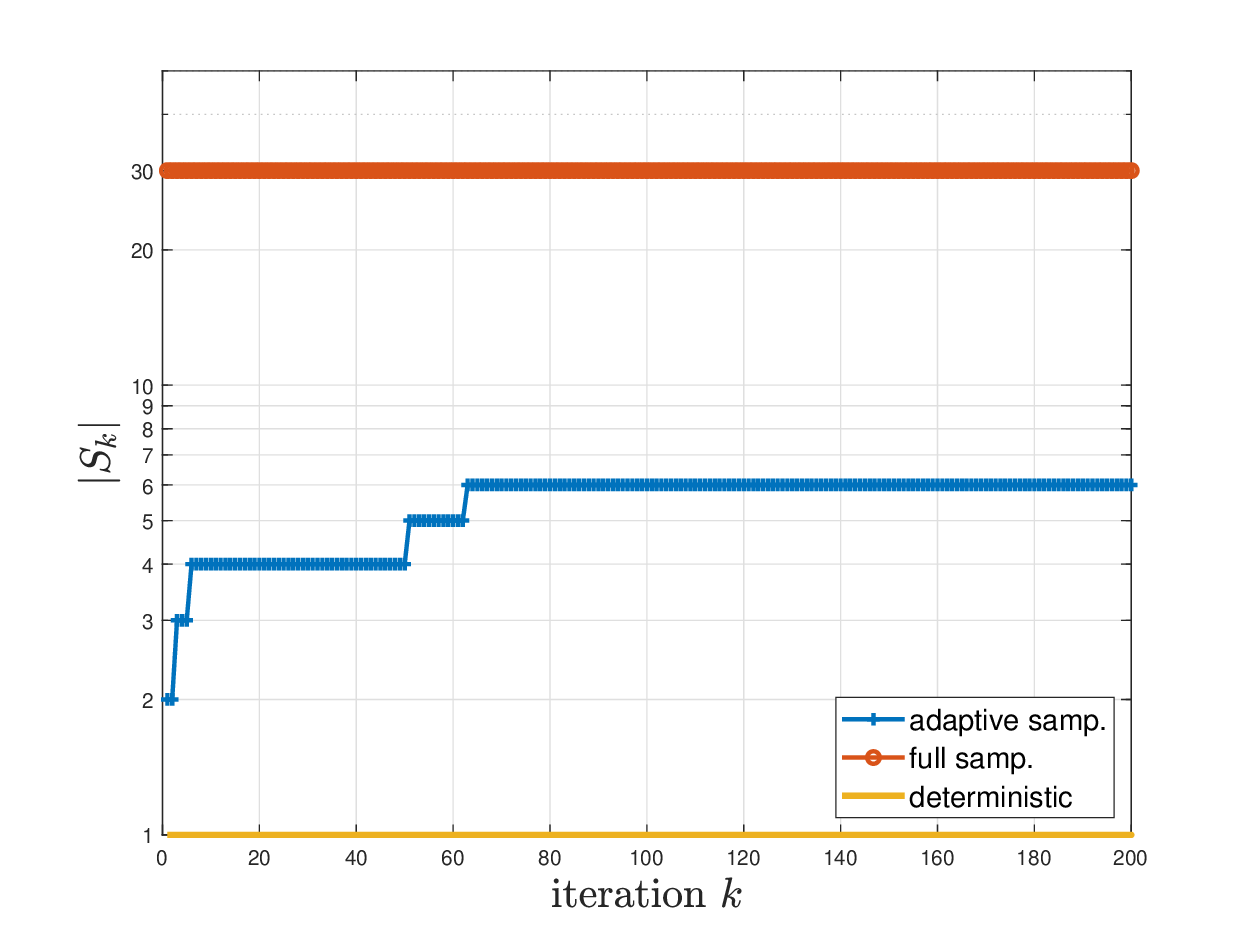}
            \caption{Number of Monte Carlo samples.}
        \end{subfigure}
        \caption{Evolution of the mean compliance $\mathbb{E}_{\xi}[J^P]$ and the size of the Monte Carlo sample set $|S_k|$ for deterministic and random loading angles ($\kappa_\phi = 30^\circ$) on a fixed mesh.}
            \label{fig:randomness_study_mean_sample}
\end{figure}

\subsection{Mesh refinement study}

Next, we investigate the effect of mesh refinement under the full-sampling setting. Three mesh configurations are considered: a fixed coarse mesh with $29{,}162$ DoF, a fixed fine mesh with $260{,}282$ DoF, and an adaptively generated mesh whose size varies from $29{,}162$ to $43{,}288$ DoF during the optimization process. Table~\ref{table:mesh_study_comp_index} compares the computational effort associated with these mesh configurations. \x{The results indicate that although the computational cost is substantially reduced in terms of the computational index \texttt{CI}, the total computation time remains comparable to that of the fine-mesh approach. This is primarily due to the additional time required for repeatedly solving the dual problems during the adaptive mesh refinement process.} The evolution of the compliance functional is shown in Figure~\ref{fig:mesh_study_compliance}, while the final values of the optimization quantities are reported in Table~\ref{table:mesh_study_final values}. The corresponding optimized designs are displayed in Figure~\ref{fig:mesh_study_final_design}. \x{Figure~\ref{fig:mesh_study_final_design} shows that the adaptive and fine meshes lead to very similar final topologies. Furthermore, Figure~\ref{fig:mesh_study_compliance} shows that the adaptive mesh closely reproduces the fine-mesh response and significantly improves upon the coarse-mesh solution. This is confirmed by the results in Table~\ref{table:mesh_study_final values}, where the adaptive mesh yields optimization quantities much closer to those of the fine mesh, demonstrating an effective balance between accuracy and computational effort.}

 \begin{table}[htp!]
	\caption{Comparison of the computational effort between the fixed coarse mesh, fixed fine mesh, and adaptively generated mesh under full sampling in terms of the degrees of freedom, computational runtime, and the computational index defined in~\eqref{index}.}\label{table:mesh_study_comp_index}
	\centering
\begin{tabular}{l|l|l|l|l}
	               & DoF                              & $N$  & $T_{\text{total}}$[hours]  & \texttt{CI} \\ \hline
    fixed coarse mesh   & $29,\!162$                       & $30$ & $4.94$              & $4.47 \cdot 10^{10}$ \\
    fixed fine mesh     & $260,\!282$                      & $30$ & $45.6$              & $119 \cdot 10^{10}$ \\
    adaptive mesh & $29,\!162 \rightarrow 43,\!288$  & $30$ & $41.5$              & $12.5 \cdot 10^{10}$
    \end{tabular}
\end{table}
 
\begin{figure}[htp!]
    \centering
    \includegraphics[width=0.60\textwidth]{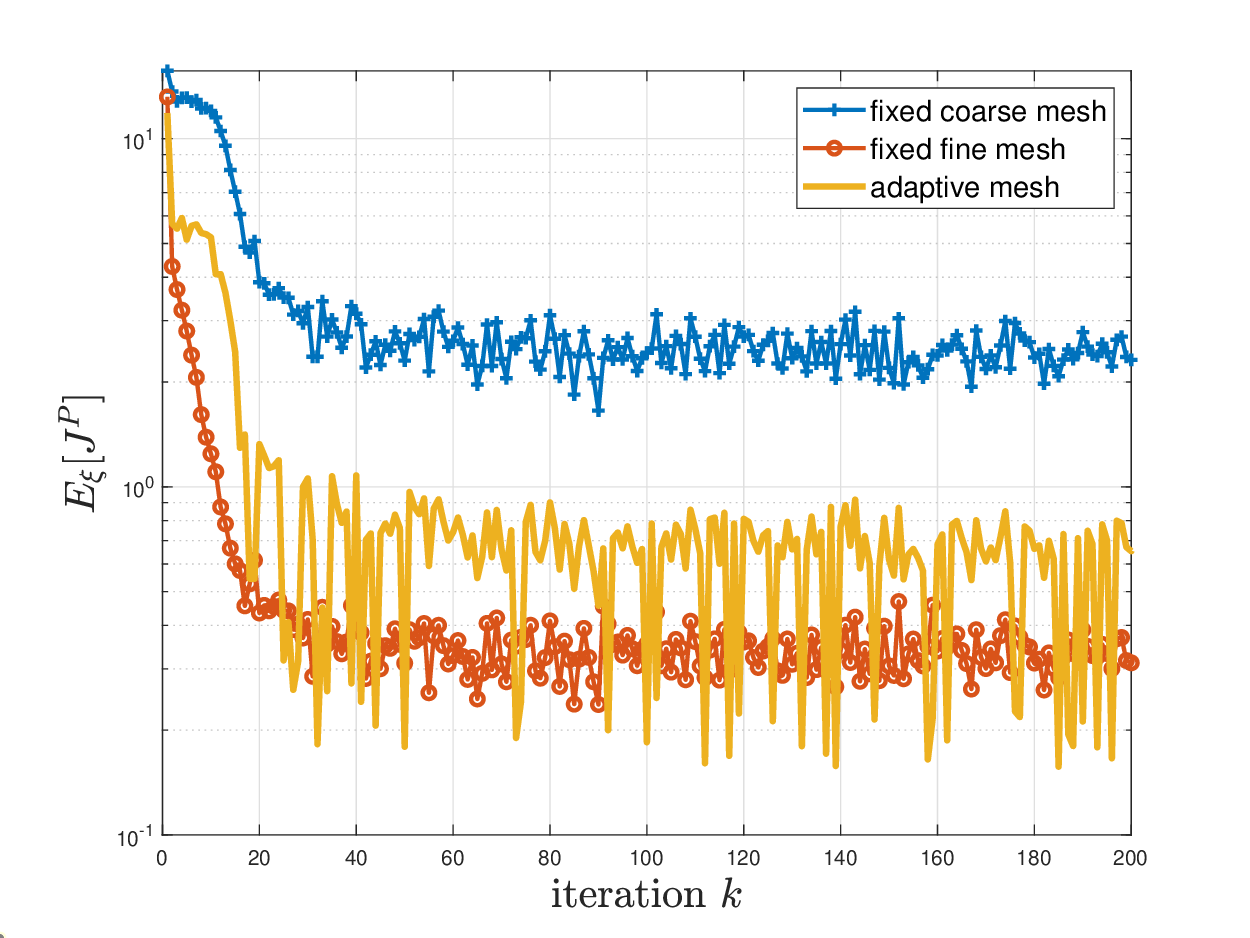}
    \caption{Evolution of the mean compliance $\mathbb{E}_{\xi}[J^P]$ obtained using the fixed coarse mesh, the fixed fine mesh, and the adaptively generated mesh under full Monte Carlo sampling.}
    \label{fig:mesh_study_compliance}
\end{figure}

\begin{table}[htp!]
    \caption{Comparison of the final values of the optimization quantities for the mesh configurations considered in the refinement study.}\label{table:mesh_study_final values}
	\centering
\begin{tabular}{llll}
	Quantity & Adaptive & Coarse Mesh & Fine Mesh \\ \hline
    $\E_{\xi}[J^P]$ & 0.65 & 2.31 & 0.31 \\
    $\cQ$ in \eqref{eqn:Q_bound} & 0.05 & 0.112 & 0.002 \\
    $\eta^c$  & 0.06 & 0.18 & 0.02 \\
    $\eta^d$  & 0.16 & 0.39 & 0.04 \\
    $\alpha_k$ & 0.01 & 0.017 & 0.03 \\
    $\E_{\xi}[\diff J^P]$ & 12.3 & 25.3 & 8.80 \\
    $\mathbb{V}_{\xi}[\diff J^P]$ & 913.9 & 3560.9 & 486.9
    \end{tabular}
\end{table}

 \begin{figure}[htp!]
         \centering
         \begin{subfigure}[b]{0.24\textwidth}
             \centering
             \includegraphics[width=\textwidth]{fs_umc_design_rev.png}
             \caption{Coarse design.}
         \end{subfigure}
         \hfill         
         \begin{subfigure}[b]{0.24\textwidth}
             \centering
             \includegraphics[width=\textwidth]{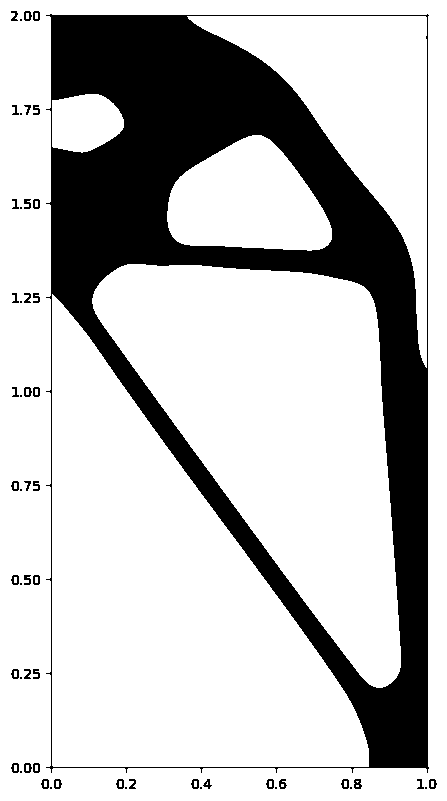}
             \caption{Fine design.}
         \end{subfigure}
         \hfill
         \begin{subfigure}[b]{0.23\textwidth}
             \centering
             \includegraphics[width=\textwidth]{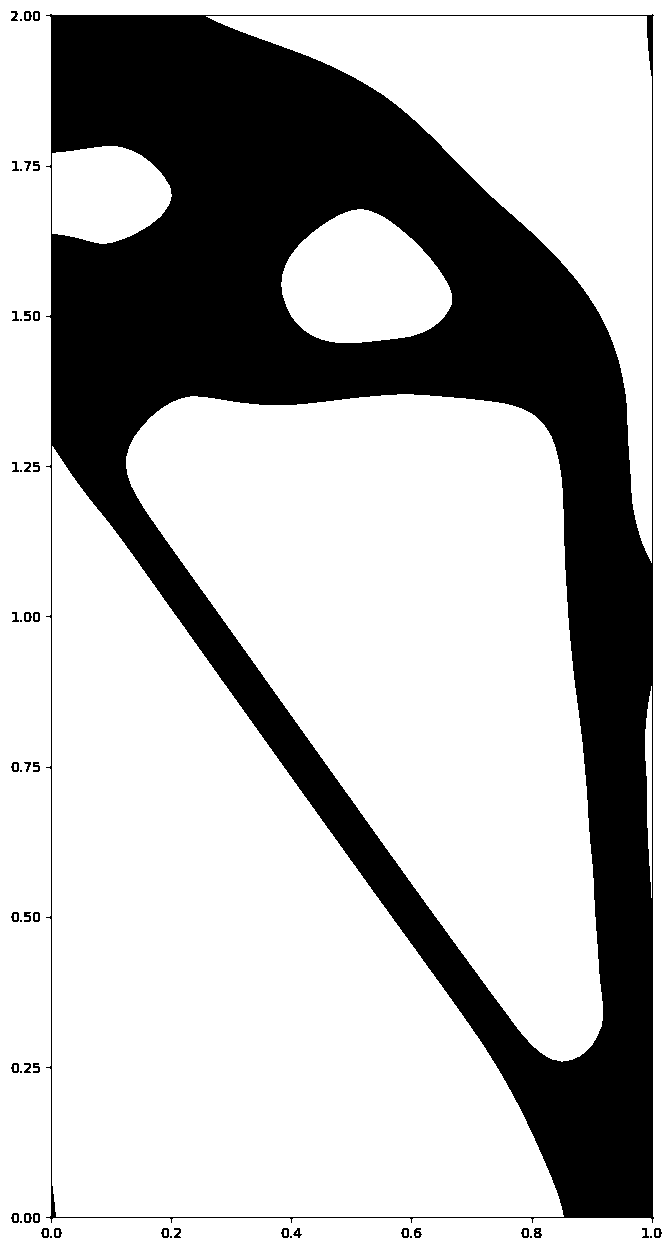}
             \caption{Adaptive design.}
         \end{subfigure}
         \hfill
         \begin{subfigure}[b]{0.23\textwidth}
             \centering
             \includegraphics[width=\textwidth]{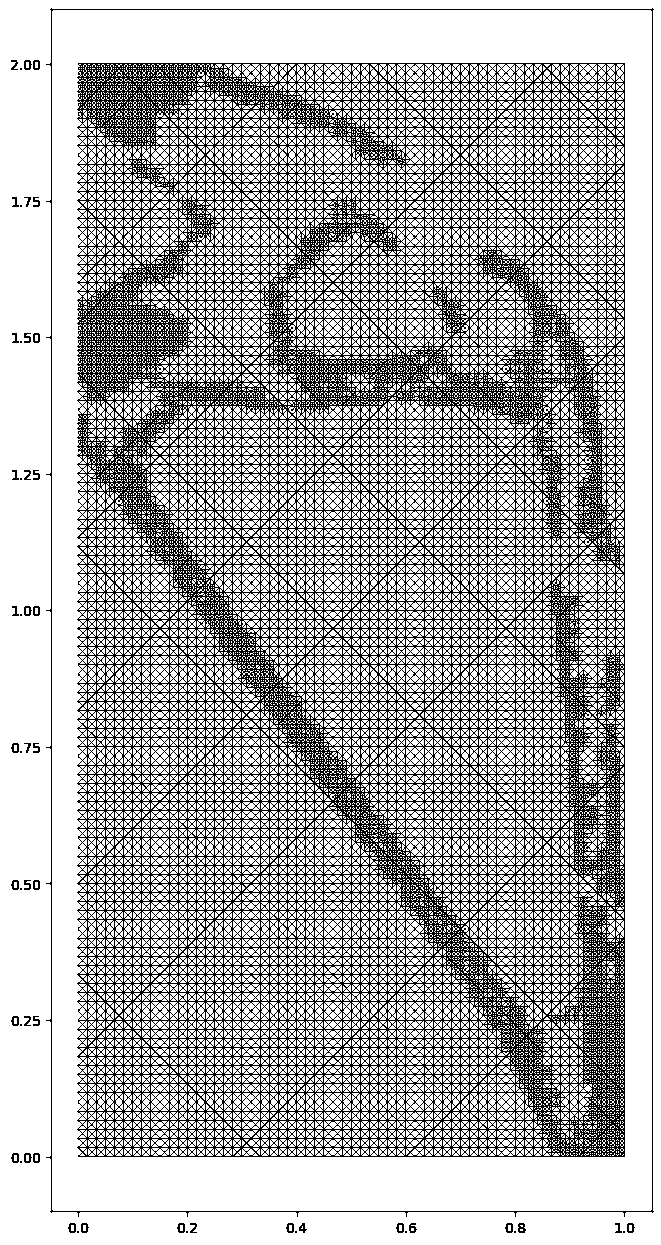}
             \caption{Adaptive mesh.}
         \end{subfigure}
         \caption{Final optimized designs obtained on the fixed coarse mesh with $29{,}162$ degrees of freedom (a), the fixed fine mesh with $260{,}282$ degrees of freedom (b), and the adaptively generated mesh with $43{,}288$ degrees of freedom (c). The corresponding adaptive mesh is shown in (d).}
         \label{fig:mesh_study_final_design}
\end{figure}

\subsection{Full adaptivity study}

Last, we employ both adaptive sampling and adaptive mesh refinement. Compared with the fixed fine mesh under full Monte Carlo sampling, the fully adaptive framework accurately captures the stochastic response while requiring substantially fewer samples and degrees of freedom (DoF); see Figure~\ref{fig:combined_evolution} and Table~\ref{table:combined_final_values}. Moreover, the adaptive procedure simultaneously balances the discretization error and the computational effort associated with the spatial and stochastic approximations. In the numerical simulations, the total runtime for the fixed fine mesh with $260{,}282$ DoF under full sampling is $45.6$ hours, whereas the fully adaptive framework, employing meshes ranging from $29{,}162$ to $43{,}378$ DoF and Monte Carlo sample sizes between $2$ and $6$, requires only $8.72$ hours; see Table~\ref{table:combined_comp_index}. According to the computational cost indicator \texttt{CI}, the fully adaptive strategy reduces the computational effort by nearly two orders of magnitude.

\x{The resulting optimized designs are shown in Figure~\ref{fig:combined_final_design}. Despite the substantial reduction in both the number of degrees of freedom and Monte Carlo samples, the adaptive design preserves the main structural features of the reference fine-mesh solution. Furthermore, Figure~\ref{fig:combined_evolution}(a) shows that the adaptive framework follows a convergence trend comparable to that of the fixed strategy. The larger oscillations observed in the adaptive compliance history are likely due to the increased variance of the stochastic gradient estimates associated with the reduced Monte Carlo sample sizes, while the mesh adaptation process may further contribute to these fluctuations. Nevertheless, the final optimization quantities reported in Table~\ref{table:combined_final_values} remain comparable to those obtained with the fixed fine mesh, demonstrating that the combined adaptive framework provides an effective balance between accuracy and computational efficiency.}

\begin{figure}[htp!]
        \centering
        \begin{subfigure}[b]{0.48\textwidth}
            \centering
            \includegraphics[width=\textwidth]{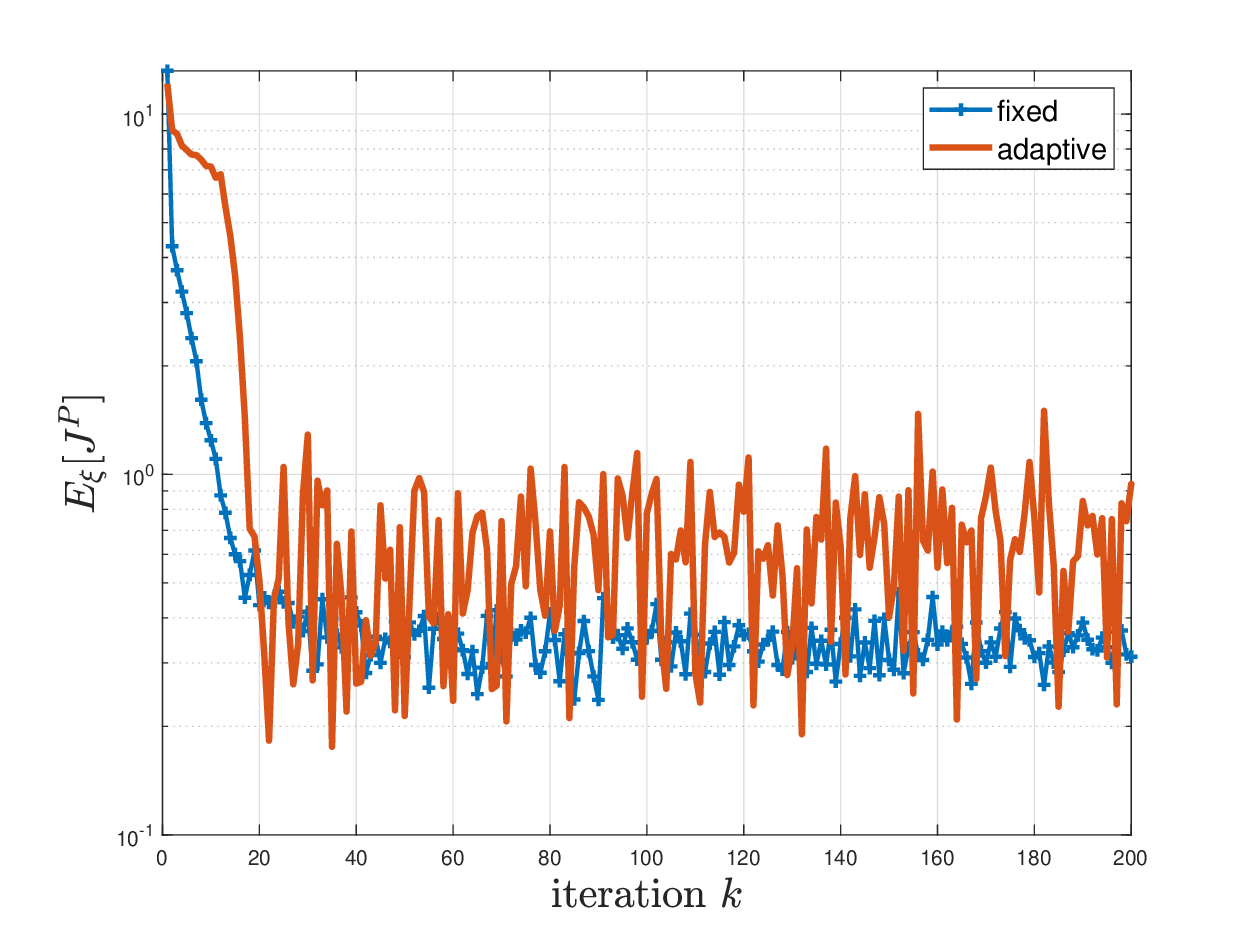}
            \caption{Mean compliance.}
        \end{subfigure}
        \begin{subfigure}[b]{0.48\textwidth}
            \centering
            \includegraphics[width=\textwidth]{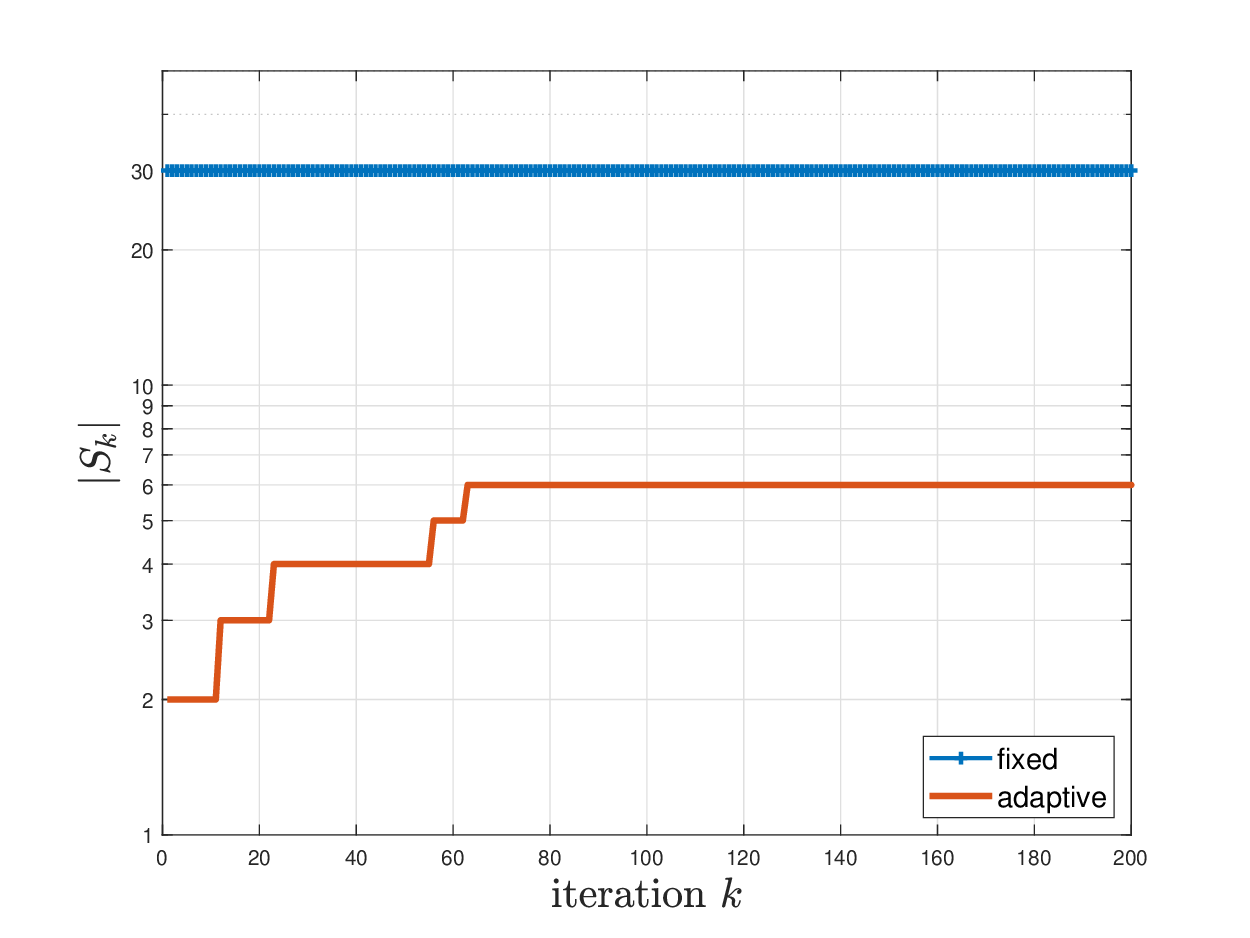}
            \caption{Sample size.}
        \end{subfigure}       
        \begin{subfigure}[b]{0.48\textwidth}
            \centering
            \includegraphics[width=\textwidth]{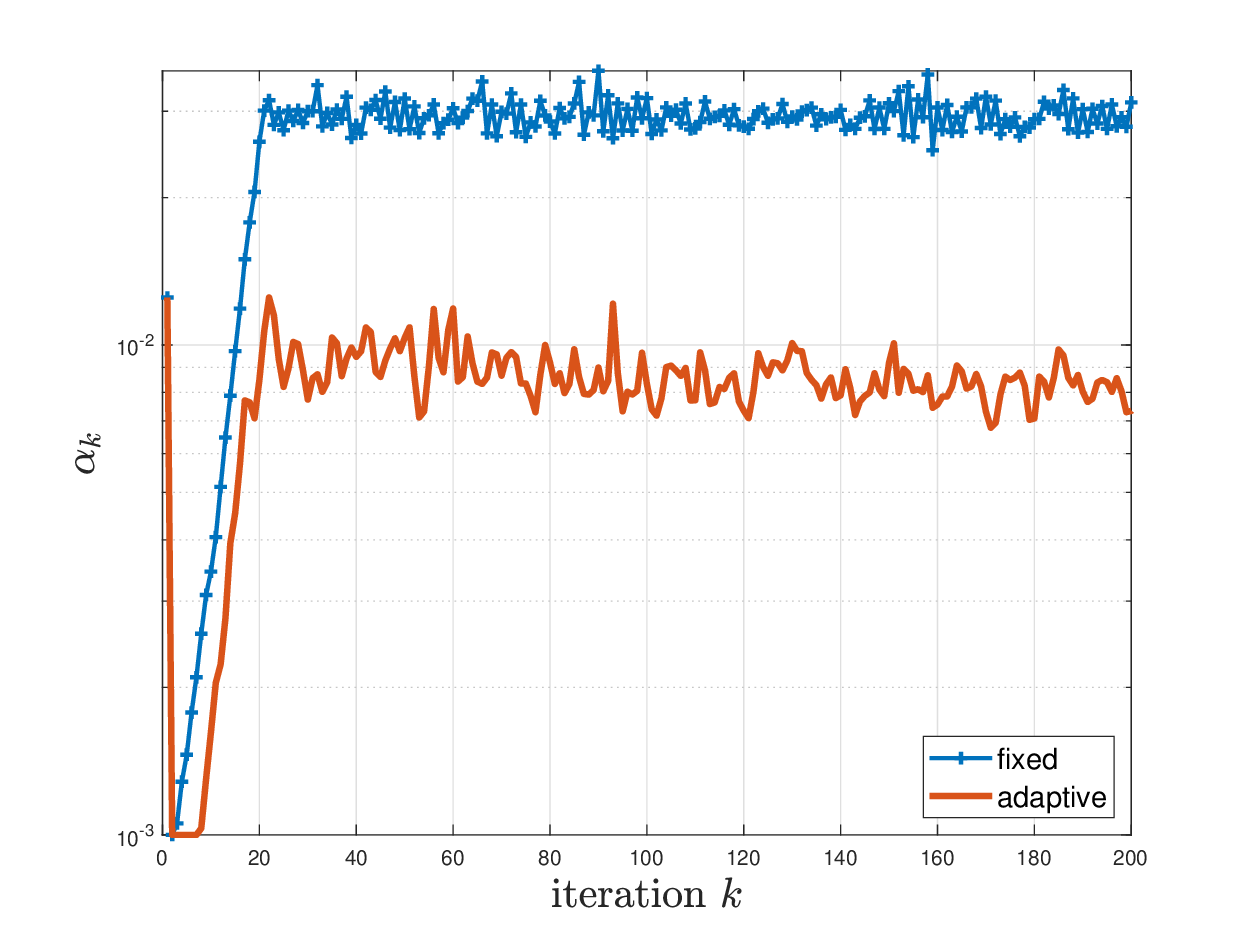}
            \caption{Step length.}
        \end{subfigure}
            \caption{Evolution of the mean compliance $\mathbb{E}_{\xi}[J^P]$, the Monte Carlo sample size $|S_k|$, and the step length $\alpha_k$ for the fixed and adaptive approaches.}
            \label{fig:combined_evolution}
\end{figure}

\begin{table}[htp!]
	\caption{Comparison of the final values of the optimization quantities in the fully adaptive study.}\label{table:combined_final_values}
	\centering
\begin{tabular}{lcc}
	Quantity & Adaptive Mesh \& Sampling & Fine Mesh \& Full Sampling  \\ \hline
    $\E_{\xi}[J^P]$ & 0.94 & 0.37 \\
    $\cQ$ in \eqref{eqn:Q_bound} & 0.07 & 0.002 \\
    $\eta^c$  & 0.09 & 0.02 \\
    $\eta^d$ & 0.30 & 0.04 \\
    $|S|$ & $2 \rightarrow 6$ & 30 \\
    $\alpha_k$ & 0.007 & 0.031 \\
    $\E_{\xi}[\diff J^P]$& 16.35 & 8.80 \\
    $\mathbb{V}_{\xi}[\diff J^P]$& 264.6 & 486.9
    \end{tabular}
\end{table}

\begin{table}[htp!]
	\caption{Comparison of the computational effort between the fixed fine mesh under full sampling and the  adaptive mesh and sampling strategy in terms of the degrees of freedom, number of samples, computational runtime, and the computational index defined in~\eqref{index}.}\label{table:combined_comp_index}
	\centering
\begin{tabular}{l|l|l|l|l}
	                            & DoF                              & $|S_k|$  & $T_{\text{total}}$[hours]  & \texttt{CI} \\ \hline
    fixed     & $260,\!282$                      & $30$              & $45.6$              & $119 \cdot 10^{10}$ \\
    adaptive   & $29,\!162 \rightarrow 43,\!378$  & $2 \rightarrow 6$ & $8.72$              & $2.42 \cdot 10^{10}$
    \end{tabular}
\end{table}

 \begin{figure}[htp!]
         \centering
         \begin{subfigure}[b]{0.25\textwidth}
             \centering
             \includegraphics[width=\textwidth]{fs_umf_design_rev.png}
             \caption{Fixed design.}
         \end{subfigure}
         \qquad        
         \begin{subfigure}[b]{0.25\textwidth}
             \centering
             \includegraphics[width=\textwidth]{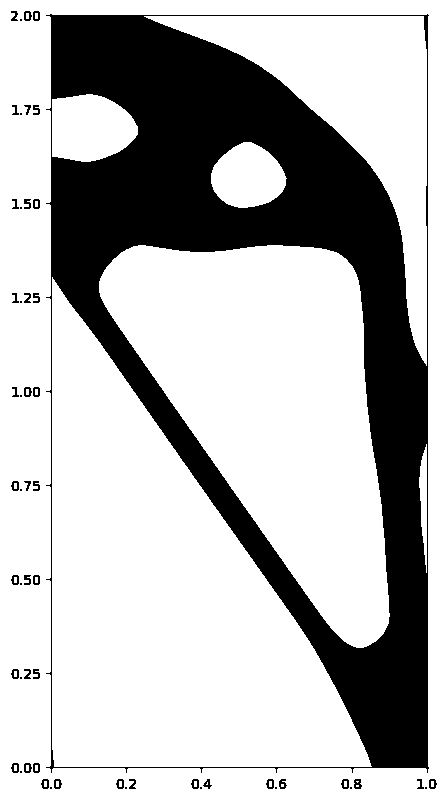}
             \caption{Adaptive design.}
         \end{subfigure}
          \qquad  
         \begin{subfigure}[b]{0.25\textwidth}
             \centering
             \includegraphics[width=\textwidth]{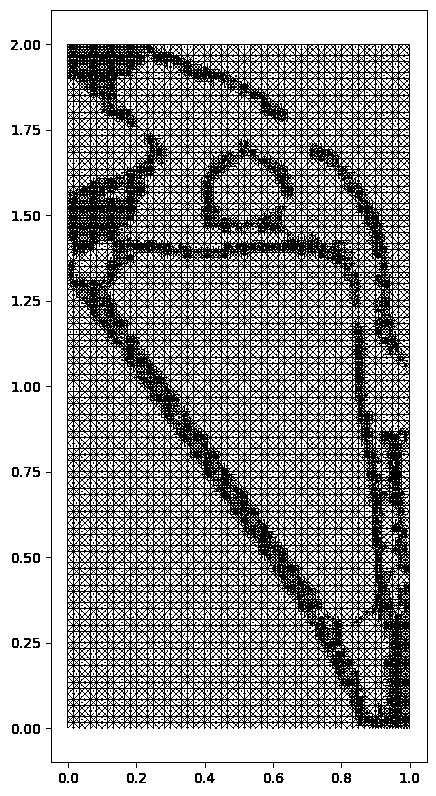}
             \caption{Adaptive mesh.}
         \end{subfigure}
         \caption{Final optimized designs obtained on the fixed fine mesh with $260{,}282$ degrees of freedom using full Monte Carlo sampling (a) and on an adaptively generated mesh with $43{,}378$ degrees of freedom using adaptive sampling and adaptive mesh refinement (b). The corresponding adaptive mesh is shown in (c).}
         \label{fig:combined_final_design}
\end{figure}


\section{Conclusions}\label{sec:conclusion}

In this paper, we have presented an adaptive framework for robust shape optimization governed by a linear elasticity model with random inputs. We have established the existence of the shape derivative and derived its explicit representation for the expected compliance minimization problem subject to a penalized volume constraint. Within this adaptive framework, the number of Monte Carlo samples, the mesh resolution, and the step length in the gradient-based optimization algorithm are all controlled by a posteriori error estimators. In addition, we have employed a multi-goal-oriented estimator derived from the discretization of both the state equation and the deformation problem. To evaluate the performance of the proposed method, we have optimized the shape of leg-like structural components to minimize compliance during touchdown under uncertain contact forces. Numerical experiments demonstrate that the adaptive procedure significantly reduces computational cost while accurately tracking the error behavior, yielding results comparable to those obtained using a fixed fine mesh with full sampling. Future work may focus on reducing the dependence of the final designs on the initial level-set configuration, a behavior observed in the numerical experiments. Possible remedies include adopting a reaction--diffusion-type level-set formulation or enhancing the stability of the Hamilton--Jacobi equation by introducing an artificial diffusion term. Moreover, the adaptive framework could be improved by introducing a dedicated goal functional to better control discretization errors associated with the Hamilton--Jacobi equation. \x{Another promising direction for future research is the extension of the proposed framework to three-dimensional shape optimization problems, where the benefits of adaptive sampling and adaptive mesh refinement are expected to become even more significant due to the substantially increased computational complexity.}

\begin{acknowledgements}
This work has been supported by the Scientific Research Projects Coordination Unit of Middle East Technical University, Ankara, Türkiye under Grant No: Genel-705-2025-11694. HY also gratefully acknowledges the support provided by the Scientific and Technological Research Council of Türkiye (TÜBİTAK) through the  2219–International Postdoctoral Research Fellowship Program (Project No: 1059B192302207), and would like to thank the Max Planck Institute for Dynamics of Complex Technical Systems, Magdeburg for its excellent hospitality. 
\end{acknowledgements}


\appendix  
\section*{Appendix: Proof of Proposition~\ref{prop:derivative_penalizedobjective}} \label{appendix:derivative_penalizedobjective}

Let $\cW_t = \Phi_t(\cW) \subset \cD$ be a parameterized domain generated by the flow $\Phi_t$ associated with a deformation field $\theta \in \Theta^k(\cD)$. Since $\theta = 0$ on $\Gamma_d \cup \Gamma_n$, it follows that $\Phi_t = \mathrm{Id}$ on $\Gamma_d \cup \Gamma_n$. Further, let $u_t(\omega) \in H^1_d(\cD)^2$  be the weak solution of \eqref{eqn:elastic_PDE_ersatz} with $\cW$ replaced by $\cW_t$. 
To facilitate  differentiation with respect to $t$, we perform a change of variables $x \mapsto \Phi_t(x)$, thereby obtaining an expression that depends on  $A_\cW$ and $f_\cW$ instead of $A_{\cW_t}$ and $f_{\cW_t}$. Applying the chain rule to  \x{the pullback variable $\widetilde{u}_t := u_t \circ \Phi_t$} yields
\begin{equation*}
        \nabla \widetilde{u}_t = \nabla (u_t \circ \Phi_t) = (\nabla u_t) \circ \Phi_t \, \nabla \Phi_t,
\end{equation*}
and we define
\begin{equation} \label{eqn:transformation}
    \begin{split}
        \varepsilon(t, \widetilde{u}_t) &:= \nabla^s u_t \circ \Phi_t 
        = \frac{1}{2} \Big(\nabla \widetilde{u}_t \, \nabla \Phi_t^{-1} + (\nabla \widetilde{u}_t \, \nabla \Phi_t^{-1})^T \Big).
    \end{split}
\end{equation}
We now follow the formal Lagrangian approach and  define the Lagrangian $\mathcal{L}(\cdot;\omega) : [0, \tau] \times H^1_d(\cD)^2 \times H^1_d(\cD)^2 \ra \R$
in view of \eqref{eqn:transformation},  
\begin{equation}
        \label{eqn:sd1}
        \begin{split}
            \mathcal{L}(t, v, q; \omega) & = \int_\cD f_\cW(\omega) \cdot v \, \zeta(t) \, dx  + \int_{\Gamma_n} g(\omega) \cdot v \, ds \\
   &\quad + \int_\cD \big( A_\cW(\omega) \varepsilon(t, v) : \varepsilon(t, q) - f_\cW(\omega) \cdot q  \big) \, \zeta(t) \, dx \\
    &\quad        - \int_{\Gamma_n} g(\omega) \cdot q \, ds,
        \end{split}
\end{equation}
where $\zeta(t)$ denotes the Jacobian determinant of the transformation $x \mapsto \Phi_t(x)$  and $\varepsilon(0, v) = \nabla^s v$. Note that the Jacobian determinant $\zeta(t)$ and the mapping $\Phi_t$ do not appear in the boundary integrals since $\Phi_t = \mathrm{Id}$ on $\Gamma_n$.

One can easily see that  $J(\cW_t, \omega) = \mathcal{L}(t, \widetilde{u}_t, q; \omega)$ for all $q \in H^1_d(\cD)^2$. Using the averaged-adjoint framework, one obtains
\begin{equation}
        \label{eqn:sd2}
        \begin{split}
            \diff J(\cW,\omega)(\theta) = \frac{\diff}{\diff t} \mathcal{L}(t, \widetilde{u}_t, z;\omega)\rvert_{t=0} 
            = \partial_t \mathcal{L}(0, \widetilde{u}_0, z;\omega),
        \end{split}
\end{equation}
which allows us to avoid the explicit differentiation of the state variable $\widetilde{u}_t$. In \eqref{eqn:sd2}, by noting that $u=\widetilde u_0$, the adjoint variable $z$ is determined from the first-order optimality condition $\partial_v \mathcal{L}(0, u, z;\omega)(\varphi) = 0 \;\,  \forall  \varphi \in H^1_d(\cD)^2$.  By using the symmetry property $A_\cW = A_\cW^T$ for each $\omega \in \Omega$ and noting that $\zeta(0)=1$, \eqref{eqn:sd1} yields
\begin{equation}
    \label{eqn:sd3}
    \begin{split}
        \partial_v \mathcal{L}(0, u, z;\omega)(\varphi)  & = \int_\cD f_\cW(\omega) \cdot \varphi \, dx + \int_{\Gamma_n} g(\omega) \cdot \varphi \, ds  
         \\
        & \quad + \int_\cD \big( A_\cW(\omega) \varepsilon(0, \varphi) : \varepsilon(0, z) \big) \, dx
     \end{split}
\end{equation}
for all $\varphi \in H^1_d(\cD)^2$. Assuming sufficient regularity, integration by parts yields the following strong form of the adjoint problem:
\begin{subequations}\label{eqn:adjoint_strong}
\begin{eqnarray}
        -\Div(A_\cW(x,\omega) \nabla^s z) &=& -f_\cW(x,\omega) \qquad \, \textnormal{in} \ \cD, \\
        z &=& 0 \qquad \qquad \qquad \; \textnormal{on} \ \Gamma_d, \\
        (A_\cW(x,\omega) \nabla^s z) n &=& -g(x,\omega) \qquad \;\;\;\,  \textnormal{on} \ \Gamma_n, \\
     (A_{\cW}(x,\omega) \nabla^s z) \, n &=& 0 \qquad  \qquad \quad \;\;\;\;\, \textnormal{on} \;\;  \partial \cD \backslash ( \overline \Gamma_d \cup \overline \Gamma_n).
\end{eqnarray}
\end{subequations}
Next, the shape derivative from \eqref{eqn:sd1}  can be computed as follows:
\begin{align}  
    \label{eqn:sd4}
    &\diff J(\cW,\omega)(\theta)  = \partial_t \mathcal{L}(0, u, z;\omega) \nonumber \\
    &\quad = \int_\cD \big( f_\cW(\omega) \cdot (u-z)\, \zeta'(0)  \big)\, dx  \nonumber \\
    &\qquad + \int_\cD \Big( 
        \big( A_\cW(\omega)\, \partial_t \varepsilon(0, u) : \varepsilon(0, z)
        + A_\cW(\omega)\, \varepsilon(0, u) : \partial_t \varepsilon(0, z) \big)\, \zeta(0)  \nonumber \\
    &\qquad\qquad + \big( A_\cW(\omega)\, \varepsilon(0, u) : \varepsilon(0, z) \big)\, \zeta'(0)
        \Big)\, dx.
\end{align}
Recalling that $\zeta(0) = 1$ and $\zeta'(0) = \Div \theta$, an application of the chain rule yields
\begin{equation}\label{eqn:sd5}
    \begin{split}
        \partial_t \varepsilon(0, u) 
        &= -\frac{1}{2}\Big( \nabla u\, \nabla \theta 
        + (\nabla u\, \nabla \theta)^T \Big), \\
            \partial_t \varepsilon(0, z) 
        &= -\frac{1}{2}\Big( \nabla z\, \nabla \theta 
        + (\nabla z\, \nabla \theta)^T \Big).
    \end{split}
\end{equation}
Then, substituting \eqref{eqn:sd5} into \eqref{eqn:sd4}, we obtain
\begin{align*}
        \diff J(\cW,\omega)(\theta)
        &= \int_\cD f_\cW(\omega)\cdot (u-z)\,\Div\theta \, dx \\
        &\quad
        - \int_\cD
        \Big(
        \nabla u^T \big(A_\cW(\omega)\nabla^s z\big)
        + \nabla z^T \big(A_\cW(\omega)\nabla^s u\big)
        \Big)
        : \nabla\theta \, dx \\
        &\quad
        + \int_\cD
        A_\cW(\omega)\nabla^s u : \nabla^s z \,
        \Div\theta \, dx .
\end{align*}
Using the symmetry property $(A_\cW \nabla^s u)^T = A_\cW \nabla^s u$ 
together with the identity $z = -u$ obtained from 
\eqref{eqn:elastic_PDE_ersatz} and \eqref{eqn:adjoint_strong}, 
we arrive at
\begin{align}\label{eqn:sd7}
        \diff J(\cW,\omega)(\theta)
        &= \int_\cD \big( 2 \nabla u^T A_\cW(\omega) \nabla^s u 
        + \left( 2 f_\cW(\omega) \cdot u 
        - A_\cW(\omega) \nabla^s u : \nabla^s u \right) \mathrm{Id} \big) 
        : \nabla \theta \, dx
\end{align}
for all $\theta \in \Theta^k(\cD)$.

For the shape derivative of the second term in \eqref{eqn:optimization_penalized_fixed}, we use the distributed expression for the shape derivative of the volume \cite[Theorem~4.1]{MCDelfour_JPZolesio_2011}, namely,
\begin{equation}\label{eqn:sd8}
 \diff \mathcal{V}(\cW)(\theta) = \int_\cW \Div \theta \, dx, \quad \hbox{where} \quad \mathcal{V}(\cW) = \int_\cW dx.
\end{equation}
Finally,  combining \eqref{eqn:sd7} and \eqref{eqn:sd8}, the desired result follows.




\end{document}